\newcommand{\WFcomment}[1]{}
\newcommand{\WFsuggest}[1]{{\color{blue!80!black} {\textbf [#1]}}}
\newcommand{\simiid}{\stackrel{i.i.d.}{\sim}}
\newcommand{\simind}{\stackrel{ind}{\sim}}
\newcommand{\PP}{\mathbb{P}}
\newcommand{\EE}{\mathbb{E}}
\newtheorem{theorem}{Theorem}
\newtheorem{proposition}{Proposition}
\newtheorem{definition}{Definition}
\newtheorem{cor}{Corollary}
\newcommand{\rev}[1]{{\textcolor{black}{#1}}}
\definecolor{ForestGreen}{rgb}{0.0, 0.35, 0.13}
\newcommand{\resp}[1]{{\color{ForestGreen}{#1}}}
\begin{document}\large
\title{Optimality of the max test for detecting sparse signals with Gaussian or heavier tail}
\author{Xiao Li and William Fithian\\Department of Statistics, UC Berkeley}


\maketitle{}

\begin{abstract}
  A fundamental problem in high-dimensional testing is that of {\em global null testing}: testing whether the null holds simultaneously in all of $n$ hypotheses. The max test, which uses the smallest of the $n$ marginal p-values as its test statistic, enjoys widespread popularity for its simplicity and robustness. However, its theoretical performance relative to other tests has been called into question. In the Gaussian sequence version of the global testing problem, \citet{donoho2004higher} discovered a so-called ``weak, sparse" asymptotic regime in which the higher criticism and Berk-Jones tests achieve a better detection boundary than the max test when all of the nonzero signal strengths are identical.
  We study a more general model in which the non-null means are drawn from a generic distribution, and show that the detection boundary for the max test is optimal in the ``weak, sparse" regime, provided that the distribution's tail is no lighter than Gaussian. Further, we show theoretically and in simulation that the modified higher criticism of \citet{donoho2004higher} can have very low power when the distribution of non-null means has a polynomial tail.
\end{abstract}

\section{Introduction}

\subsection{Sparse signal detection}

Closely related to multiple testing is the problem of testing the {\em global null} or {\em intersection null}, which asserts that all of $n$ univariate null hypotheses are true; this is sometimes called the {\em signal detection problem}, since it amounts to asking whether there is any signal at all. One strategy, popular among methodologists and practitioners alike for its simplicity, transparency, and robustness, is to reject when the largest univariate test statistic is above a critical threshold, or equivalently when the smallest univariate $p$-value is below an appropriately corrected significance level. This method, called the {\em max test}, is closely associated with the multiple testing procedure that rejects individual hypotheses with $p$-values below the same threshold, which is $1-(1-\alpha)^{1/n}$ if the $p$-values are independent (called the {\em \v{S}id\'{a}k correction}), or $\alpha/n$ if the dependence structure is completely unknown (the {\em Bonferroni correction}), and may be obtained by simulation in other cases \citep{sidak1968multivariate}. Because the associated multiple testing procedure controls the familywise error rate (FWER), the max test can be tacked on as a logical deduction about the global null, incurring no additional FWER.

However, the adequacy of the max test for signal detection has been placed in doubt because it does not always achieve an optimal detection boundary in the {\em Gaussian sequence model} where we observe $X \sim N_n(\mu, I_n)$, a canonical testing ground for high-dimensional statistical methods. In certain sparse asymptotic regimes of this model, the max test is outperformed by more sophisticated special-purpose tests of the global null $H_0:\; \mu_i = 0$ for all $i$, against $H_1:\; \mu_i \neq 0$ for some $i$.


Most notably, Donoho and Jin \citeyearpar{donoho2004higher,donoho2015higher} compared the max test to the {\em higher criticism} (HC) test, which rejects the global null for large values of Tukey's higher criticism statistic $$HC_n=\sup_{1\,\leq\, i\,\leq\, n/2} \frac{\sqrt{n}(i/n-p_{(i)})}{\sqrt{p_{(i)}(1-p_{(i)})}}=\sup_{0\,\leq\, t\,\leq\, 1/2} \frac{\sqrt{n}(\widehat{F_n}(t)-t)}{\sqrt{t(1-t)}},$$
where $p_{(1)}\leq\dots\leq p_{(n)}$ are the ordered $p$-values and $\widehat{F_n}(t)$ is their empirical distribution function. They also studied two related tests: the {\em modified higher criticism } test, which rejects for large values of
\[
mHC_n= \sup_{1/n\,\leq \,t\,\leq\, 1/2} \frac{\sqrt{n}(\widehat{F_n}(t)-t)}{\sqrt{t(1-t)}},
\]
and the {\em Berk-Jones test}, which rejects for large values of
\[
BJ_n = \max_{1\,\leq\, k\, \leq\, n/2} \;(2n)^{1/2} \left\{\frac{k}{n}\,\log\left(\frac{k}{np_{(k)}}\right) +
\left(1-\frac{k}{n}\right)\,\log\left(\frac{n-k}{n(1-p_{(k)})}\right)\right\}^{1/2}.
\]
They showed, in a model where all nonzero $\mu_i$ take the same value, that the higher criticism, modified higher criticism, and Berk-Jones tests all achieve the optimal detection boundary in the sparse asymptotic regime where the number $n_1$ of nonzero signals grows more slowly than $n^{1/2}$ (for denser signals, the $\chi^2$ test is typically much more powerful than all tests under comparison here). By contrast, the max test falls short unless $n_1 = O(n^{1/4})$. In light of these results, it has been widely accepted as a stylized fact that these special-purpose tests dominate the max test for sparse signal detection.

While \cite{donoho2004higher} provide a remarkably detailed and complete picture of global testing in the asymptotic regime they study, it is natural to ask how the story might change if we relax the rather restrictive assumption that all of the nonzero signals have identical strength, since in real applications we would expect these to vary in magnitude. 
This article considers a more general setting where the non-null signals are instead drawn from a distribution $G_n$: 
\begin{equation}
\label{eq:sigmaG}
\{\mu_i\}_{i=1}^n\stackrel{i.i.d}{\sim} (1-\pi_n)\delta_0(\cdot)+\pi_nG_n(\cdot), \quad \pi_n = n^{-\beta}, \quad 0<\beta <1.
\end{equation}
This model was previously studied by \citet{cai2014optimal}, who showed under certain regularity conditions in the sparse regime $\beta > 1/2$ that the higher criticism test achieves the optimal detection boundary in the signal sparsity parameter $\beta$. In particular we will be interested in the case where all $G_n$ come from a common scale family with scale parameter $\sigma_n$. The regime of \cite{donoho2004higher} is a special case where $G_n = \delta_{\sigma_n}$ for $\sigma_n = \sqrt{2r\log n}$.

Interestingly, we find that relaxing the assumption of identical non-null signals shows the max test in a considerably better light. Our main results are summarized in the last three rows of Table~\ref{table:main}. Essentially, if the tails of $G_n$ are at least as heavy as Gaussian, the max test achieves optimal performance throughout the sparse regime, i.e. $\beta > 1/2$. Furthermore, if $G_n$ has polynomial tails, we find that the max test asymptotically dominates the modified higher criticism test; the higher criticism and Berk-Jones tests remain competitive but only because of their similarity to the max test. We give explicit formulae for the detection threshold when $G_n$ has Gaussian, exponential, and polynomial tails and confirm our results with numerical experiments. We find empirically that a hybrid test combining the max test and $\chi^2$ test is a practical choice with high power across all sparsity levels.

\begin{table*}\centering
\begin{tabular}{@{}rcccccccc@{}}\toprule
&& \multicolumn{2}{c}{Asymptotic parameters} & \phantom{abc} & \multicolumn{3}{c}{Achieves optimal asymptotic behavior}\\
\cmidrule{3-4} \cmidrule{6-8}
Alternative distribution & \phantom{abc} & $\sigma_n$ & $\beta$  
    & \phantom{abc} &  Max test & Higher criticism  & Modified HC  \\\midrule
\multirow{2}{*}{Point mass} && \multirow{2}{*}{$r\sqrt{\log n}$} & $(1/2, 3/4)$ 
    && \ding{55}           & \checkmark & \checkmark \\ \vspace{1.2em}
&&&$ (3/4, 1)$
    && \checkmark & \checkmark & \checkmark \\ \vspace{1.2em} 
Gaussian&&$r$&$ (1/2, 1)$
    && \checkmark & \checkmark & \checkmark \\ \vspace{.8em} 
Exponential&&$\displaystyle{\frac{r}{\sqrt{2\log n}}}$&$ (1/2, 1)$
    && \checkmark & \checkmark & \checkmark \\ \vspace{.7em} 
Student's $t_\nu$&&$\displaystyle{\frac{r\sqrt{2\log n}}{n^{(1-\beta)/\nu}}}$&$ (1/2, 1)$
    && \checkmark & \checkmark &    \ding{55}        \\
\bottomrule
\end{tabular}
\caption{Optimality of different tests for special cases of our asymptotic regime, where $\sigma_n$ is calibrated so that the problem is barely solvable. For the point mass, Gaussian and exponential distribution, a checkmark \checkmark indicates that the test 
achieves the optimal ``detection boundary" for the parameter $r$. For Student's $t_\nu$, there exists no sharp ``detection boundary" for $r$, and a checkmark \checkmark indicates that the test has full asymptotic power as $r\to\infty$. These results are proved in Theorems \ref{thm:main}--\ref{thm:polytail} and Corollary \ref{cor:density:pareto}.}
\label{table:main}
\end{table*}

We hope that our results will help to rehabilitate the max test, which enjoys many practical advantages over its special-purpose competitors in settings where asymptotic results are equivocal: First, its Type I error control is fairly robust to incorrect specification of the dependence between $p$-values; by contrast, the higher criticism test can be highly anticonservative even with very slight correlations between $p$-values. Second, when the max test rejects, the logical and mathematical basis for rejection is extremely simple and transparent: namely, that one $|X_i|$ value was too large. This simplicity confers a form of scientific robustness, allowing non-expert users to more easily interrogate how modeling assumptions contribute to the scientific conclusion. 
Third, beyond the multiple testing interpretation giving rise to the max test, we can also easily invert it to obtain a simple rectangular confidence region for $\mu\in \mathbb{R}^n$ giving simultaneous confidence intervals for every $\mu_i$; the totality of these inferences is much more informative than a binary accept/reject decision about the global null. By contrast, for the other tests, there is a more complex relationship between rejecting the global null and making inferences about individual $\mu_i$ values. Fourth, the modified higher criticism test cannot reject unless the fifth-largest $|X_i|$ is quite large; as a result, it is essentially powerless in the sparsest setting, where there are one or two extremely large signals. Finally, the max test is computationally cheap while the others require lengthy simulations.


\subsection{Related work}

Some recent theoretical work on global testing has relaxed the assumption of identical non-null means. \citet{tony2011optimal} considered the case where the non-null means are sampled from a Gaussian distribution $N(A_n, \sigma^2)$ where the variance $\sigma^2$ is fixed and $A_n = \sqrt{2r\log n}$ for some $r\in (0,1)$. Under this model, they showed that the higher criticism test achieves optimal asymptotic behaviour for $\beta\in (0, 1)$.
 Although different from a point mass, the model resembles the one in \citet{donoho2004higher}: since $\sigma^2$ is fixed as $n\to\infty$, the non-null means still concentrate around $\sqrt{2r \log n}$, leading to qualitatively similar limiting behavior as a point mass.
\citet{cai2014optimal} expanded this analysis to the more general model \eqref{eq:sigmaG}, proving optimality in certain conditions for the higher criticism test but not discussing the power of the commonly used max test.


The higher criticism's favorable theoretical performance has led to many efforts to generalize it beyond the model with independent errors studied here. One line of theoretical work has focused on studying the properties of higher criticism  type tests when observations are correlated.
\citet{hall2008properties} gave a detailed discussion of related issues. They showed that the null distribution of higher criticism  changes dramatically under weak dependence. In contrast, the max test is more robust to dependence, and the Type-I error can be controlled under arbitrary dependence.
 \citet{hall2010innovated} later proposed the innovated higher criticism to deal with the case of known covariance matrix with polynomially decaying off-diagonal elements. 
However, the innovated higher criticism can only be used if the covariance matrix of observations can be estimated reasonably well. Statisticians have also proposed various
extensions of higher criticism type tests for more general settings, such as ANOVA \citep{arias2011global}, time-frequency analysis \citep{cai2016global}, genetic association studies \citep{barnett2017generalized}, multi-sample analysis \citep{chan2015optimal}, and polynomial tailed noise distributions \citep{arias2019detection}, etc. It is an interesting question for future work whether the max test or generalizations thereof might perform equally well. \WFcomment{This edit is meant to soften our implied criticism of the HC test by pointing out that there are many domains in which the HC test remains the dominant approach, and we aren't implying that will not remain the case.}


There has also been a lot of work that studies higher criticism type tests from a computational perspective.
In practice, the cutoff and $p$-values of higher criticism type statistics is often obtained by Monte Carlo simulation. An alternative approach for small sample size via numerical recursion was given by \citet{noe1972calculation}, \citet{owen1995nonparametric} and further developed by \citet{moscovich2013calibrated, moscovich2016exact} and \citet{li2015higher}. \citet{li2015higher} showed that their approximations for the $p$-value of higher criticism type statistics are reasonably accurate, even for small $p$-values and large samples.


Most papers on the global testing problem focus on the performance of the higher criticism or related statistics. Our contributions differ from these in that we show the max test enjoys many of the same theoretical advantages despite its simple form, and has similar finite sample power as the higher criticism test in a wide range of settings.


\section{Main results}

\subsection{The critical sparsity level}
We consider the following sequence of alternatives
\begin{equation}
\label{eq:althypo}
H_1^n: \mu_i\stackrel{i.i.d}{\sim} (1-\pi_n)\delta_0(\cdot)+\pi_n G_n({\cdot}),
\end{equation}
where the expected proportion of nonzero means is
$$\displaystyle \pi_n = n^{-\beta}, 0<\beta<1$$
and $G_n(\mu)$ is the distribution of the nonzero means. With slight abuse of notation, we will also use $G_n$ to denote the cumulative distribution function of the distribution. The alternative hypothesis in \citet{donoho2004higher} is a special case of this model taking $G_n$ as the point mass at $\sqrt{2r\log n},$ for $0<r \leq 1$. Following most previous literature on this topic, we restrict our attention to the sparse regime with $\beta < 1/2$; otherwise the $\chi^2$ test is potentially much more powerful than other tests. For simplicity, we drop the superscript on $H_1$ when the dimension $n$ is clear. 

The {\em total variation} (TV) distance between two probability measures $Q_1$ and $Q_2$ is defined as
$d_{TV}(Q_1, Q_2) = \sup_{A}|Q_1(A)-Q_2(A)|.$
For any test that tries to distinguish $H_1^n$ from $H_0^n$, the sum of its Type I and Type II error is lower bounded by
\[
1 - d_{\text{TV}}(H_0^n, H_1^n),
\]
where we write
$d_{\text{TV}}(H_0^n, H_1^n)$ as a shorthand for
\[
d_{\text{TV}}\left(H_0^n, H_1^n) = d_{\text{TV}}(\Phi^n, ((1-\pi_n)\Phi + \pi_n (G_n * \Phi))^n\right).
\]

By the Neyman-Pearson lemma \citep{neyman1933ix}, the likelihood ratio test is uniformly most powerful for testing $H_1^n$ against $H_0^n$. Indeed, the above lower bound is achieved if we reject $H_0^n$ when the likelihood ratio is greater than 1. Therefore, the TV distance $d_{\text{TV}}(H_0^n, H_1^n)$ tightly characterizes the hardness of the testing problem.

For any sequence $G_n$, the TV distance $d_{\text{TV}}(H_0^n, H_1^n)$ is non-increasing in $\beta$ for each $n$, with larger values of $\beta$ making the testing problem harder. Following \cite{cai2014optimal}, we introduce the concept of the {\em critical sparsity level}, which is a value $\beta^*$ that demarcates a sharp transition from asymptotic consistency to asymptotic powerlessness:

\begin{definition}\label{def:crit-sparse}
Fixing the sequence $\{G_n\}$, we define
\[
\underline{\beta}^* = \sup\left\{\beta \geq 0: \lim_n d_{\text{TV}}(H_0^n, H_1^n) = 1\right\}; 
\;\text{ and } \;\bar{\beta}^* = \inf\left\{\beta \leq 1: \lim_n d_{\text{TV}}(H_0^n, H_1^n) = 0\right\}.
\]
When $\underline{\beta}^* = \bar{\beta}^*$, we denote the common value as $\beta^*$, and call it the critical sparsity level corresponding to $\{G_n\}$.
\end{definition}

If a critical sparsity level $\beta^*$ exists for a sequence $\{G_n\}$ (i.e., if $\underline{\beta}^* = \bar{\beta}^*$), it follows from Definition~\ref{def:crit-sparse} that
\begin{itemize}
    \item If $\beta > \beta^*$, then $\lim_{n\to\infty} \EE_{H_1^n}[\phi_n(X)]\to \alpha$ for any sequence of level-$\alpha$ tests $\phi_n$, and
    \item If $\beta < \beta^*$, then $\lim_{n\to\infty} \EE_{H_1^n}[\phi_{LRT}(X)]\to 1$ for the level-$\alpha$ likelihood ratio test $\phi_{LRT}$.
\end{itemize}

We say that a sequence of level-$\alpha$ tests $\phi_n$ is {\em asymptotically consistent} on the sequence $\{H_1^n\}$ if  $\lim_{n\to\infty} \EE_{H_1^n}[\phi_n(X)]\to 1$ for any $\alpha$, and {\em asymptotically powerless} on the sequence $\{H_1^n\}$ if $\lim_{n\to\infty} \EE_{H_1^n}[\phi_n(X)]\to \alpha$ for any $\alpha$. \WFcomment{tweak this so it makes sense.}
We say that the sequence {\em achieves the optimal critical sparsity level} for the sequence $\{G_n\}$ if it has full asymptotic power whenever $\beta < \beta^*$. \WFcomment{Your plan makes sense to me, but "achieves the optimal critical sparsity level" sounds a bit clunky. Can we come up with a more elegant-sounding name?}


It will often be natural to parameterize the tail of Gaussian distribution as $\sqrt{2\delta \log n} \approx z_{n^{-\delta}}$, the upper $n^{-\delta}$ quantile of the standard normal distribution. If we define
\begin{equation}
\label{eq:taun}
\tau_n(\delta) = \log_n \mathbb{P}_{\mu \sim G_n}(X > \sqrt{2\delta\log n})
\end{equation}
as the tail probability of a single non-null observation, \cite{cai2014optimal} proved sufficient conditions for optimality of the higher criticism test in the sparse regime:

\begin{proposition}
\label{prop:HCoptimal}
 Suppose that $\{\tau_n(\delta)\}_{n=1}^{\infty}$ converges uniformly for all $\delta\in [0,1]$. Then the sequence of alternatives in \eqref{eq:althypo} has a critical sparsity level $\beta^*$, and if $\beta^* > 1/2$ then the level-$\alpha$ higher criticism test has full asymptotic power whenever $\beta < \beta^*$.
\end{proposition}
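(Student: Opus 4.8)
The plan is to identify the information-theoretic threshold with the functional
\[
\beta^\dagger := \tfrac12 + \sup_{\delta\in[0,1]}\Big(\tfrac{\delta}{2} + \tau(\delta)\Big), \qquad \tau(\delta):=\lim_n \tau_n(\delta)
\]
(the limit existing by the uniform-convergence hypothesis), and to sandwich the transition between a \emph{power} bound for higher criticism and an \emph{impossibility} bound from a second-moment argument. Two standard facts anchor everything. First, under $H_0^n$ the one-sided $p$-values $p_i=1-\Phi(X_i)$ are i.i.d.\ uniform and $\sup_{0\le t\le 1/2}\sqrt n(\widehat{F_n}(t)-t)/\sqrt{t(1-t)}$ is sub-polynomial in $n$ (of order $\sqrt{2\log\log n}$), so the level-$\alpha$ critical value $c_n(\alpha)$ is sub-polynomial; hence to prove consistency it suffices to exhibit a single threshold at which the HC summand diverges polynomially. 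Second, by $d_{\mathrm{TV}}\le\tfrac12\sqrt{\chi^2}$ together with the tensorization of $\chi^2$ over the $n$ i.i.d.\ coordinates, the impossibility direction reduces to a one-dimensional moment computation.

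For the power bound, fix $\delta$ and set $t_n=1-\Phi(\sqrt{2\delta\log n})=n^{-\delta+o(1)}$, so that $p_i\le t_n$ is the event $X_i>\sqrt{2\delta\log n}$. A null coordinate satisfies this with probability $t_n$ and a non-null coordinate with probability $n^{\tau_n(\delta)}$ by \eqref{eq:taun}, so
\[
\EE_{H_1^n}[\widehat{F_n}(t_n)] - t_n = \pi_n\big(n^{\tau_n(\delta)} - t_n\big),
\]
and the HC summand at $t_n$ has standardized mean
\[
\frac{\sqrt n\,\big(\EE_{H_1^n}[\widehat{F_n}(t_n)] - t_n\big)}{\sqrt{t_n(1-t_n)}} = n^{\frac12 - \beta + \tau_n(\delta) + \frac{\delta}{2} + o(1)}.
\]
Since $n\widehat{F_n}(t_n)$ is a sum of independent indicators, a Chebyshev (or Bernstein) bound shows the realized summand concentrates around this mean; hence whenever $\beta<\beta^\dagger$ we may pick $\delta$ near the maximizer to make the exponent strictly positive, the summand diverges polynomially, it dominates $c_n(\alpha)$, and the test rejects with probability tending to one. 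Uniform convergence is used precisely here, to replace $\tau_n$ by $\tau$ uniformly in $\delta$ and to guarantee the supremum defining $\beta^\dagger$ is approached by admissible thresholds. This shows $\underline{\beta}^*\ge\beta^\dagger$.

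For the matching impossibility bound I would start from the likelihood ratio $L_n=\prod_i[1-\pi_n+\pi_n g_n(X_i)/\phi(X_i)]$ with $g_n=G_n*\phi$, whose second moment factorizes as
\[
\EE_{H_0^n}[L_n^2] = \big(1+\pi_n^2(\rho_n-1)\big)^n, \qquad \rho_n=\EE_{\mu,\mu'\simiid G_n}\big[e^{\mu\mu'}\big],
\]
using $\int\phi(x-\mu)\phi(x-\mu')/\phi(x)\,dx=e^{\mu\mu'}$. The \emph{unmodified} second moment is not tight in the sparse regime: the mass of $\rho_n$ sits at observations near $x=\mu+\mu'$, which for the largest signals lies beyond $\sqrt{2\log n}$ — the region where only the max test operates and which, when $\beta>\beta^*$, contains no observations with probability tending to one under \emph{either} hypothesis. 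The remedy is a truncated second moment: cap each coordinate's likelihood-ratio contribution at the observation level $\sqrt{2\log n}$, show the discarded event is negligible in total variation across all $n$ coordinates (so it cannot help any test), and control the truncated statistic, for which a Laplace evaluation gives
\[
\log_n\!\Big(n\pi_n^2\!\int_{-\infty}^{\sqrt{2\log n}}\!\tfrac{g_n(x)^2}{\phi(x)}\,dx\Big) \longrightarrow 1-2\beta+\sup_{a,b\ge0}\big[2\sqrt{ab}+\sigma(a)+\sigma(b)-(\sqrt a+\sqrt b-1)_+^2\big],
\]
where $\sigma(a)=\lim_n\log_n\PP_{\mu\sim G_n}(\mu>\sqrt{2a\log n})$ is the tail exponent of $G_n$ itself and the last term comes from the truncating factor $\Phi(\sqrt{2\log n}-\mu-\mu')$.

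The crux, and the step I expect to be hardest, is the purely analytic identity
\[
\tfrac12\sup_{a,b\ge0}\big[2\sqrt{ab}+\sigma(a)+\sigma(b)-(\sqrt a+\sqrt b-1)_+^2\big] = \sup_{\delta\in[0,1]}\big(\tfrac{\delta}{2}+\tau(\delta)\big),
\]
which translates the double large-deviations rate governing the truncated moment into the single-threshold functional governing higher criticism; it rests on the inf-convolution relation $\tau(\delta)=\sup_a[\sigma(a)-(\sqrt\delta-\sqrt a)_+^2]$ induced by the Gaussian convolution, and its verification is again where uniform convergence is essential, both to pass to the limiting exponents uniformly and to ensure the suprema are attained on $[0,1]$. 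Granting it, the truncated moment tends to zero exactly when $\beta>\beta^\dagger$, whence $d_{\mathrm{TV}}(H_0^n,H_1^n)\to0$ and $\bar\beta^*\le\beta^\dagger$. Combined with $\underline{\beta}^*\ge\beta^\dagger$ and the general inequality $\underline\beta^*\le\bar\beta^*$, all three coincide, so the critical sparsity level $\beta^*=\beta^\dagger$ exists and, by the power bound, higher criticism attains it whenever $\beta^*>1/2$.
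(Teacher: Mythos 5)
Your proposal follows essentially the same route as the paper and its cited source: the power direction is the same single-threshold Chebyshev argument given in the appendix (your exponent $\tfrac12-\beta+\tau(\delta)+\tfrac{\delta}{2}$ is exactly the quantity $\lambda^*(\delta)-\tfrac{1-\delta}{2}$ of Proposition~\ref{prop:lambda-power}), and the impossibility direction is the truncated second-moment bound of \citet{cai2014optimal}, which the paper cites rather than reproduces. The only incomplete step is the asserted identity equating the double large-deviations rate with $\sup_{\delta}\bigl(\tfrac{\delta}{2}+\tau(\delta)\bigr)$, which you rightly flag as the crux; it does hold (it is the content of Cai and Wu's calculation, and checks out, e.g., for point-mass alternatives), so the plan is sound.
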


While \cite{cai2014optimal} only explicitly proved this for the higher criticism test, one can slightly modify their proof to show that this proposition holds for the modified higher criticism test and the Berk-Jones test as well (the proof is deferred to the Appendix). In this paper, we are interested in the following question: for which distributions $G_n$ does the max test achieve the same critical sparsity level $\beta^*$? \cite{donoho2004higher} showed that 
this is true when $G_n$ is a point mass and $\beta^*\geq 3/4$, which is by far the best-known result for this problem. We will show that under a mild regularity condition, when $\beta^*>1/2$, the max test also achieves the optimal critical sparsity level.

\subsection{Optimality of the max test}

To formally state our main result, we first need to introduce {\em regularly varying functions}. Following \cite{bingham1989regular}, we say that a function $Q: (0, \infty)\to (0, \infty)$ is a regularly varying function if
the limit 
$$g_Q(t) = \lim_{x\to\infty}\frac{Q(tx)}{Q(x)}$$
is finite and nonzero for all $t>0$. For any regularly varying function $Q$, it was shown in \cite{galambos1973regularly} that the limit $g_Q(t)$ has the form 
$$g_Q(t) = t^{\gamma}$$
for some value $\gamma\in(-\infty, \infty)$, which is called the {\em index of regular variation} of $Q$.

Among distributions with unbounded support, we consider those for which
$$-\max\{\log(1-G(\theta)), \log G(-\theta)\} \text{ is a regularly varying function}.$$
As noted by \cite{arias2019detection}, this class of distributions extended the definition of generalized Gaussian models, which are commonly used as benchmarks in this line of work. It covers the cases where $\log(1-G(\theta))=\log G(-\theta) \sim -\theta^{a}(\log \theta)^b, a>0, b\in\mathbb{R}$. The index $\gamma$ corresponds to the tail of the distribution $Q$, with smaller $\gamma$ indicating heavier tails. In particular, $\gamma = 2$ corresponds to a Gaussian tail, and $\gamma=1$ to an exponential tail.

Our main result shows essentially that the max test achieves the optimal detection boundary as long as $\beta \geq 3/4$ {\em or} the tail of $G$ is no lighter than Gaussian:

\begin{theorem}
\label{thm:main}
Under the assumptions of Proposition \ref{prop:HCoptimal}, suppose that either

(A1) $\beta ^* > 3/4$,  or

(A2) $G_n$ is a scale family with $G_n(\mu) = G(\mu/\sigma_n)$ for some sequence $\sigma_n$, where $-\max\{\log(1-G(\theta)), \log G(-\theta)\}$ is a regularly varying function with index of regular variation $\gamma\leq 2$.

Then if $\beta^* > 1/2$, the level-$\alpha$ max test $\phi_{\text{Max}}$ has full asymptotic power whenever $\beta < \beta^*$.
\end{theorem}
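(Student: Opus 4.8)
The plan is to reduce the theorem to a one–variable optimization. Write $\tau(\delta)=\lim_n\tau_n(\delta)$ for the uniform limit granted by the hypotheses of Proposition~\ref{prop:HCoptimal}, and set $f(\delta)=\tfrac12+\tfrac\delta2+\tau(\delta)$ on $[0,1]$. First I would show that the max test has full asymptotic power if and only if $\beta<f(1)$: calibrating its threshold $t_n$ so that $n(1-\Phi(t_n))\to c_\alpha$ forces $t_n=\sqrt{2\delta_n\log n}$ with $\delta_n\to 1$, and since the coordinates are independent under $H_1^n$, the rejection probability is $1-(1-p_n)^n$ with $np_n=(1+o(1))c_\alpha+n^{1-\beta}\,\PP_{G_n}(X>t_n)=(1+o(1))c_\alpha+n^{\,1-\beta+\tau_n(\delta_n)}$; uniform convergence gives $\tau_n(\delta_n)\to\tau(1)$, so the power tends to $1$ exactly when $1-\beta+\tau(1)>0$. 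In parallel, a first/second–moment argument at each fixed threshold level $\delta$ (each such test being dominated by higher criticism) together with a truncated–likelihood–ratio second–moment bound of Ingster--Donoho--Jin type identifies the critical sparsity level as $\beta^*=\sup_{0\le\delta\le1}f(\delta)$. The theorem thus reduces to the analytic claim that, under (A1) or (A2), this supremum is attained at the endpoint $\delta=1$. The common engine is a large–deviation representation of the non-null tail: setting $K(\xi)=\lim_n-\log_n\bigl(1-G_n(\xi\sqrt{2\log n})\bigr)\in[0,\infty]$, I would prove
\[
\tau(\delta)=\sup_{0\le\xi\le\sqrt\delta}\bigl\{-K(\xi)-(\sqrt\delta-\xi)^2\bigr\},
\]
the lower bound being exact because $\{\mu>\xi\sqrt{2\log n}\}\cap\{Z>(\sqrt\delta-\xi)\sqrt{2\log n}\}\subseteq\{X>\sqrt{2\delta\log n}\}$ while $1-\Phi(c\sqrt{2\log n})=n^{-c^2+o(1)}$, and the matching upper bound following from Laplace's method over a net of signal levels.

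For (A1), let $\xi^*=\xi^*(\delta)\le\sqrt\delta$ attain the supremum above. Inserting the \emph{same} $\xi^*$ into the exact lower bound at level $\delta=1$ gives $\tau(1)\ge-K(\xi^*)-(1-\xi^*)^2$, whence a short computation yields $f(1)-f(\delta)\ge(1-\sqrt\delta)\bigl[\,2\xi^*-\tfrac{1+\sqrt\delta}{2}\,\bigr]$. Hence if $\xi^*\ge\tfrac{1+\sqrt\delta}{4}$ then $f(1)\ge f(\delta)$; and if $\xi^*<\tfrac{1+\sqrt\delta}{4}$ then, dropping the nonpositive term $-K(\xi^*)$, we get $f(\delta)\le\tfrac12+\tfrac\delta2-(\sqrt\delta-\xi^*)^2$, which an elementary maximization over $\xi^*$ bounds by $3/4$. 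Therefore $\sup_\delta f=\max\{3/4,\,f(1)\}$, so the hypothesis $\beta^*>3/4$ forces $\sup_\delta f=f(1)$.

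For (A2) I would instead establish convexity of $\tau$. Passing to the threshold variable $u=\sqrt{2\delta\log n}$, the representation reads $-\tau_n\log n=\psi(u):=\min_x\{J_n(x)+\tfrac12(u-x)^2\}$, the Moreau envelope of the signal rate $J_n(x)=-\log(1-G_n(x))$; thus $\psi\in C^1$ with $\psi'(u)=u-x^*(u)$, and a direct calculation shows $\tau_n$ is convex in $\delta$ exactly when the optimal signal fraction $x^*(u)/u$ is non-decreasing in $u$. Differentiating the stationarity condition $J_n'(x^*)=u-x^*$, where $J_n'$ is regularly varying of index $\gamma-1$, shows this monotonicity holds precisely when $\gamma\le2$ (with $x^*/u$ constant, i.e.\ $\tau_n$ linear, when $\gamma=2$). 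Since convexity passes to the pointwise limit, $\tau$ and hence $f$ are convex, so $f$ attains its maximum at an endpoint; as $f(0)=\tfrac12<\beta^*$, the maximizer is $\delta=1$, giving $\sup_\delta f=f(1)$ once more. In either case $\beta^*=\sup_\delta f=f(1)$, and combining with the first paragraph the max test has full power for every $\beta<\beta^*$.

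I expect the main obstacle to be making the convolution large–deviation representation fully rigorous: controlling the slowly varying corrections hidden in $K$, absorbing the sub-leading $\log\log n$ shift of the max-test threshold into the $o(1)$ terms, and—most delicately for (A2)—establishing monotonicity of the optimal signal fraction $x^*(u)/u$, including the regime $\gamma\le1$ where $J_n$ is no longer convex and the Moreau-envelope argument must be supplemented by a direct analysis of the stationarity condition.
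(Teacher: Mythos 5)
Your overall reduction is the same as the paper's: the representation $\tau(\delta)=\sup_{0\le\xi\le\sqrt\delta}\{-K(\xi)-(\sqrt\delta-\xi)^2\}$ is exactly Lemma~\ref{lemma:tail} after the substitution $\xi=t\sqrt\delta$, the identification of $\beta^*$ with $\sup_\delta f(\delta)$ and of the max test's consistency with $f(1)>\beta$ is Proposition~\ref{prop:lambda-power}, and your treatment of (A1) --- re-inserting the optimal signal level $\xi^*(\delta)$ into the lower bound at $\delta=1$, splitting on $\xi^*\gtrless\tfrac{1+\sqrt\delta}{4}$, and checking that the small-$\xi^*$ branch is capped at $3/4$ --- is the same argument as part (a) of Lemma~\ref{lemma:tailbehavior} in different coordinates; I verified the algebra $f(1)-f(\delta)\ge(1-\sqrt\delta)\bigl[2\xi^*-\tfrac{1+\sqrt\delta}{2}\bigr]$ and the bound by $3/4$, and that part is correct.

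The gap is in (A2). Your plan rests on the claim that $\tau_n$ is convex in $\delta$ because the proximal fraction $x^*(u)/u$ is non-decreasing whenever $\gamma\le 2$, obtained by ``differentiating the stationarity condition $J_n'(x^*)=u-x^*$, where $J_n'$ is regularly varying of index $\gamma-1$.'' This step is not licensed: regular variation of $Q$ gives no control on $Q'$ (which need not exist, be monotone, or be regularly varying --- that is the content of the monotone density theorem, which requires extra hypotheses), and for finite $n$ the convexity of $\tau_n$ is simply false in general. For instance $Q(x)=x^2\bigl(2+\sin\log\log x\bigr)$ is regularly varying of index $2$, yet the stationarity relation gives $x^*/u\approx\bigl(1+2(2+\sin\log\log x^*)\bigr)^{-1}$, which oscillates; so ``convexity passes to the pointwise limit'' has nothing to pass. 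The argument can only be run on the limit $\tau$, where the slowly varying factor washes out and $K(\xi)=c\xi^\gamma$ exactly --- but there one must still prove that $\xi^*(\sqrt\delta)/\sqrt\delta$ is non-decreasing, and for $\gamma\le 1$ the inner objective $-c\xi^\gamma-(\sqrt\delta-\xi)^2$ is not concave in $\xi$, the maximizer can sit at the boundary $\xi=0$ for small $\delta$ and jump to an interior branch later, and the stationarity equation has multiple roots. You flag exactly this as ``the main obstacle'' rather than resolving it, so the (A2) half is not complete. The paper's route avoids all of this: in part (b) of Lemma~\ref{lemma:tailbehavior} it never touches derivatives or convexity, but compares the maximizer $(\delta^*_n,t^*_n)$ directly with the single point $(1,1/2)$, using only the uniform convergence $Q(as)/Q(s)\to a^\gamma\le a^2$ on compact sets of $a$ (after first bounding $t_n^*\sqrt{\delta_n^*}$ into such a compact set) to reach a contradiction. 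If you want to keep the convexity picture, you would need to supply the missing monotonicity proof for the limiting power-law $K$, including the non-convex regime $\gamma\le 1$ and the branch-switching of the maximizer.
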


We will provide intuition for Theorem~\ref{thm:main} and a partial proof in Section~\ref{subsec:excesstail}, deferring a key technical lemma to the Appendix. The regularly varying assumption cannot be removed for $\beta^*<3/4$; see the Appendix for a counterexample where $G$ is stochastically larger than an exponential distribution but the max test is not optimal. Finally, note that neither Proposition~\ref{prop:HCoptimal} nor Theorem~\ref{thm:main} characterizes what occurs at the boundary where $\beta = \beta^*$; we discuss this boundary regime in the polynomial tail case in Section~\ref{subsec:polytail}.

Viewing the results of \citet{donoho2004higher} in light of Theorem~\ref{thm:main}, we see that the suboptimality of the max test in their asymptotic regime is a result of the assumption that all nonzero $\mu_i$ are identical. As a direct corollary of Theorem~\ref{thm:main}, we can derive explicit formulae for the critical sparsity levels of densities with polynomial tails, exponential tails, and Gaussian tails respectively: 




\begin{cor}
\label{cor:density:pareto}
Suppose that $G_n$ belong to a scale family with $G_n(\mu) = G(\mu/\sigma_n)$, for some distribution $G$ with density function $g(\theta)$.

(1) If $g(\theta) = \Theta(\theta^{-\nu-1})$ for some $\nu>0$, and $\sigma_n\sim n^\rho$ with $\rho > -(2\nu)^{-1}$, then the critical sparsity level is
$$\beta^*(\rho) = \nu \rho + 1.$$

(2) If $g(\theta) = \Theta(e^{-\theta})$ and $\sigma_n=r(2\log n)^{-1/2}$ with $r > \sqrt{2}/(\sqrt{2}-1)$, then 
$$\beta^*(r) = \left(1-\frac{1}{r}\right)^2.$$

(3) If $g(\theta) = \Theta\left(e^{-\frac{(\theta-\gamma)^2}{2}}\right)$ for some $\gamma\in\mathbb{R}$,  and $\sigma_n=r$ with $r > 1$, then
$$\beta^*(r) = \frac{r^2}{r^2 + 1}.$$

\end{cor}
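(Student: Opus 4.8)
The plan is to reduce everything to a computation of the limiting tail exponent $\tau(\delta) = \lim_n \tau_n(\delta)$ of the non-null marginal $G_n * \Phi$, together with the variational characterization of the critical sparsity level that underlies Proposition~\ref{prop:HCoptimal}. Concretely, the count of coordinates exceeding the threshold $\sqrt{2\delta\log n}$ has null mean $\asymp n^{1-\delta}$ and fluctuation $\asymp n^{(1-\delta)/2}$, while the non-null coordinates contribute $\asymp n^{1-\beta+\tau(\delta)}$; the signal is detectable at scale $\delta$ exactly when $1-\beta+\tau(\delta) > (1-\delta)/2$. This yields
\[
\beta^* = \sup_{\delta\in[0,1]}\Big\{\tfrac{1+\delta}{2} + \tau(\delta)\Big\},
\]
which one checks reproduces the Donoho--Jin boundary for the point mass. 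Since for each tail type the function $-\max\{\log(1-G(\theta)),\log G(-\theta)\}$ is regularly varying with index $\gamma\in\{2,1,0\}\le 2$, once $\beta^*>1/2$ is confirmed Theorem~\ref{thm:main}(A2) immediately promotes this into optimality of the max test. So the work is to compute $\tau$, optimize, and verify the stated parameter ranges are precisely the set $\{\beta^*>1/2\}$.

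For the \emph{Gaussian} tail the convolution is exact: with $\sigma_n=r$ the non-null marginal is $N(r\gamma,\,r^2+1)$, so $\tau(\delta) = -\delta/(r^2+1)$. For the \emph{exponential} tail I would use a Laplace/saddle-point estimate of $\int \PP(\mu>t-z)\,\phi(z)\,dz$: the Gaussian factor tilts the exponent by $\EE e^{z/\sigma_n}$, i.e.\ by $1/(2\sigma_n^2)=\log n/r^2$, producing a phase transition at $\delta=1/r^2$ between a ``noise-driven'' branch $\tau(\delta)=-\delta$ (for $\delta\le 1/r^2$) and a ``signal-driven'' branch $\tau(\delta)=1/r^2-2\sqrt{\delta}/r$ (for $\delta\ge 1/r^2$). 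For the \emph{polynomial} tail I would invoke the single-big-jump (subexponentiality) principle to show the Gaussian noise is negligible, so $\PP(X>t)\sim\PP(\mu>t)=\Theta(\sigma_n^\nu t^{-\nu})$ and, with $\sigma_n\asymp n^\rho$, $\tau(\delta)=\nu\rho$ for every $\delta\in(0,1]$.

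The optimizations are then elementary. In the Gaussian case the objective is linear in $\delta$ with slope $\tfrac12-\tfrac{1}{r^2+1}$, positive iff $r>1$, so the sup is at $\delta=1$ and $\beta^*=r^2/(r^2+1)$. In the exponential case the signal-driven branch is an upward parabola in $u=\sqrt\delta$ (minimized in the interior), so the sup over $[0,1]$ is attained at $\delta=1$ with value $(1-1/r)^2$, which exceeds the competing endpoint value $1/2$ exactly when $r>\sqrt2/(\sqrt2-1)$. In the polynomial case $\tau$ is constant on $(0,1]$, so the sup is again at $\delta=1$ and $\beta^*=1+\nu\rho$, which exceeds $1/2$ exactly when $\rho>-(2\nu)^{-1}$. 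In each case the stated hypothesis on the scale parameter is thus equivalent to $\beta^*>1/2$, after which Theorem~\ref{thm:main} applies.

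The main obstacles are the two tail estimates rather than the optimizations. The exponential case requires making the saddle-point tilt rigorous and locating the transition $\delta=1/r^2$ uniformly in $\delta$; the polynomial case requires a single-big-jump bound with an $n$-dependent scale $\sigma_n$ and $n$-dependent threshold. A further subtlety is that Proposition~\ref{prop:HCoptimal} demands uniform convergence of $\tau_n$ on all of $[0,1]$: this holds for the Gaussian and exponential tails, but for the polynomial tail $\tau_n$ is continuous in $\delta$ while its limit jumps from $0$ at $\delta=0$ to $\nu\rho$ on $(0,1]$, so convergence is \emph{not} uniform near $\delta=0$ and the polynomial case must instead be routed through the dedicated polynomial-tail analysis of Theorem~\ref{thm:polytail}. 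Since the optimizing scale $\delta=1$ is far from $\delta=0$, the value $\beta^*=1+\nu\rho$ is unaffected by this defect.
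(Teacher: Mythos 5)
Your proposal is correct and follows essentially the same route as the paper: both compute the limiting tail exponent $\tau(\delta)$ of $G_n * \Phi$ via a Laplace-type estimate (the paper's Lemma~\ref{lemma:tail}), read off $\beta^*$ from the variational criterion underlying Proposition~\ref{prop:lambda-power}, and find the supremum attained at $\delta=1$, exactly as in the paper's proof which evaluates $\lambda_n(1)$ and optimizes over the inner variable $t$. The only quibble is that in the polynomial case $\tau(\delta)=\max\{-\delta,\nu\rho\}$ rather than $\nu\rho$ on all of $(0,1]$ (the pure-noise contribution dominates for $\delta<-\nu\rho$), which incidentally dissolves your worry about non-uniform convergence near $\delta=0$; this does not change the optimizer $\delta=1$ or the value $\beta^*=\nu\rho+1$.
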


\rev{\cite{cai2014optimal} derived the critical sparsity level when the alternative means follow the generalized Gaussian distribution, and Part (2) and (3) of this corollary are special cases of such distribution. In these two scenarios, Theorem \ref{thm:main} shows that the likelihood ratio test, the max test and the higher criticism test are asymptotically consistent when $\beta < \beta^*(r)$, and asymptotically powerless when  $\beta > \beta^*(r)$. As such, the critical sparsity level produces a sharp detection boundary for the scale parameter $r$. For example, if $g(\theta) = \Theta(e^{-\theta})$ and $\sigma_n=r(2\log n)^{-1/2}$, then all three tests are asymptotically consistent if $r> r^*(\beta):=(1-\sqrt{\beta})^{-1}$, and asymptotically powerless if $r< (1-\sqrt{\beta})^{-1}$. Thus, the desired sharp detection boundary is  $r^*(\beta) = (1-\sqrt{\beta})^{-1}$. Part (1) of this corollary exhibits a different regime: when the alternative means follow a $t$ distribution, there does not exist a sharp detection boundary for a scale parameter $r$. Instead, there is a sharp detection boundary in the growth rate $\rho$ if we set $\sigma_n = n^{\rho}$. We explore the boundary regime of the polynomial tail case further in Section~\ref{subsec:polytail}}.

\subsection{Proving Theorem \ref{thm:main} using excess tail values}
\label{subsec:excesstail}

In this section, we will explain the mathematical intuition behind Theorem \ref{thm:main}, and provide a sketch of its proof.
We begin by introducing a useful transformation of the empirical distribution of $X_i$ values, in terms of the tail parameter $\delta$. Defining \rev{$N(\delta) = \#\left\{i:\;\ |X_i| > \sqrt{2\delta\log n}\right\}$}, the higher criticism statistic may be rewritten as
\[
\text{HC}_n = \sup_{\delta \geq 0} \frac{N(\delta) - \mathbb{E}_0 N(\delta)}{\sqrt{\text{Var}_0 N(\delta)}}
    \approx \sup_{\delta > 0} \frac{N(\delta) - n^{1-\delta}}{n^{(1-\delta)/2}},
\]
where the approximation holds for large $n$, if the supremum is not achieved too close to $\delta=0$. Roughly speaking, then, the higher criticism test will have high power when the number of excess tail values is much larger than $n^{(1-\delta)/2}$, for some $\delta > 0$. By contrast, the max test rejects roughly when $N(1) > 0$.

Under the alternative, the most likely source of these excess tail values is the $n\pi_n$ non-null observations. We quantify their contribution as $N_1(\delta) = \#\left\{i:\; \mu_i \neq 0, |X_i| > \sqrt{2\delta \log n}\right\}$, and define
\begin{equation}
\label{eq:lambdan}
\lambda_n(\delta) = \log_n \mathbb{E}_1 N_1(\delta) = 1 - \beta + \tau_n(\delta),
\end{equation}
where $\tau_n$ is defined in Equation \ref{eq:taun}. Continuing our intuition from above, we expect that the higher criticism test will have high power when $\lambda_n(\delta) > \frac{1-\delta}{2}$ for any $\delta \in (0,1]$, while the max test will have high power when $\lambda_n(1) > 0$ in the limit.

\rev{Suppose that $\{\tau_n(\delta)\}_{n=1}^{\infty}$ converges uniformly for all $\delta\in [0,1]$. This is the same condition as Proposition~\ref{prop:HCoptimal} and Theorem~\ref{thm:main}. Under this condition, we denote 
\[
\tau^*(\delta) = \lim_{n\to\infty}\tau_n(\delta), \quad \text{and} \quad \lambda^*(\delta) = \lim_{n\to\infty}\lambda_n(\delta).
\]}
We can formalize the above heuristic characterization in Proposition \ref{prop:lambda-power}:
\begin{proposition}
\label{prop:lambda-power}
Suppose that $\beta>1/2$. Then
\begin{enumerate}[(a)]
\item If 
\rev{\[
\sup_{\delta \in (0,1]} \left[\lambda^*(\delta) - \frac{1-\delta}{2}\right] < 0,
\]}
then $d_{\text{TV}}(H_0^n, H_1^n) \to 0$. \WFcomment{Is this also true when $\beta^* < 1/2$?} 
\item If
\[
\sup_{\delta \in (0,1]} \left[\lambda^*(\delta) - \frac{1-\delta}{2}\right] > 0,
\]
then the likelihood ratio test, the higher criticism test, modified higher criticism test, and Berk-Jones tests all enjoy full asymptotic power.
\item The max test is asymptotically powerless if $\lambda^*(1) < 0$, and enjoys full asymptotic power if $\lambda^*(1) > 0$.
\end{enumerate}
\end{proposition}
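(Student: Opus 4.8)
All three parts hinge on comparing the expected non-null exceedance count $\EE_1 N_1(\delta)=n^{\lambda_n(\delta)}$ with the null fluctuation scale $\sqrt{\mathrm{Var}_0 N(\delta)}\asymp n^{(1-\delta)/2}$; uniform convergence of $\tau_n$ makes $\tau^*,\lambda^*$ continuous and lets us evaluate $\lambda_n$ at thresholds $\delta_n\to1$ with $\lambda_n(\delta_n)\to\lambda^*(1)$, which bridges the level-$\alpha$ critical values and the profile $\lambda^*$. For the max test, let $t_n$ be the level-$\alpha$ threshold, $\PP_0(\max_i|X_i|>t_n)=\alpha$, so that $2n\bar\Phi(t_n)\to c_\alpha:=-\log(1-\alpha)>0$ (writing $\bar\Phi=1-\Phi$); since $2n\bar\Phi(\sqrt{2\log n})\to0<c_\alpha$, this forces $t_n<\sqrt{2\log n}$, i.e. $t_n=\sqrt{2\delta_n\log n}$ with $\delta_n\to1$. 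If $\lambda^*(1)>0$, then $\EE_1 N_1(1)=n^{\lambda_n(1)}\to\infty$, and as $N_1(1)\sim\mathrm{Bin}(n,q_n)$ with $q_n=\pi_n\PP(|\mu+Z|>\sqrt{2\log n})$ we get $\PP_1(N_1(1)=0)\le e^{-\EE_1 N_1(1)}\to0$; thus some non-null $|X_i|$ clears $\sqrt{2\log n}>t_n$ w.h.p. and the test rejects. If $\lambda^*(1)<0$, the expected number of non-null coordinates exceeding $t_n$ equals $n\pi_n\PP(|\mu+Z|>t_n)=n^{\lambda_n(\delta_n)}\to n^{\lambda^*(1)}\to0$, so by Markov none does w.h.p.; meanwhile the $n_0=(1-o_P(1))n$ null coordinates reject with probability $1-(1-2\bar\Phi(t_n))^{n_0}\to1-e^{-c_\alpha}=\alpha$, and squeezing the two contributions gives total rejection probability $\to\alpha$.

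\textbf{Part (b).} Pick $\delta_0$ witnessing $\lambda^*(\delta_0)>\frac{1-\delta_0}{2}$; by continuity of $h(\delta):=\lambda^*(\delta)-\frac{1-\delta}{2}$ we may take $\delta_0$ in the interior $(0,1)$, keeping it in the admissible ranges of HC, mHC and BJ. Split $N(\delta_0)=N_0(\delta_0)+N_1(\delta_0)$: the non-null part is a Bernoulli sum of mean $n^{\lambda_n(\delta_0)}$ and standard deviation of smaller order, so $N_1(\delta_0)\ge\frac12 n^{\lambda_n(\delta_0)}$ w.h.p., whereas $N_0(\delta_0)-\EE_0 N_0(\delta_0)=O_P(n^{(1-\delta_0)/2})$. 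Hence the HC summand at $\delta_0$ is $\gtrsim n^{\lambda_n(\delta_0)-(1-\delta_0)/2}\to\infty$, overwhelming the $O(\sqrt{\log\log n})$ null critical value and giving HC full power; the same exceedance drives mHC and BJ at the interior $\delta_0$. Full power of the LRT is then immediate from Neyman--Pearson: both hypotheses are simple, so at each level $\alpha$ the LRT power dominates that of HC.

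\textbf{Part (a).} This lower bound is the main obstacle. Truncate at $t=\sqrt{2\log n}$: the $\delta=1$ case of the hypothesis gives $\lambda^*(1)<0$, so $\PP_0(\max|X_i|>t)+\PP_1(\max|X_i|>t)\to0$ and it suffices to bound the TV between the truncated i.i.d. experiments. With $R(x)=(G_n*\phi)(x)/\phi(x)$, the truncated per-coordinate $\chi^2$ divergence is of order $\pi_n^2 I_n$, $I_n=\EE_0[R(X)^2;|X|\le t]$, so it is enough to show $n\pi_n^2 I_n\to0$. The Gaussian identity $\int\phi(x-\mu_1)\phi(x-\mu_2)/\phi(x)\,dx=e^{\mu_1\mu_2}$ (and its truncated version) reduces $I_n$ to $\EE_{\mu_1,\mu_2\sim G_n}\!\big[e^{\mu_1\mu_2}(\Phi(t-\mu_1-\mu_2)-\Phi(-t-\mu_1-\mu_2))\big]$. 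The crux, deferred to the Appendix, is a Laplace/shell analysis giving $\limsup_n\log_n I_n\le\sup_{\delta\in[0,1]}[2\tau^*(\delta)+\delta]$, equivalently $\log_n(n\pi_n^2 I_n)\le\sup_{\delta\in[0,1]}2\,h(\delta)+o(1)$. Since $h$ is continuous, $h<0$ on $(0,1]$ by hypothesis and $h(0)=\frac12-\beta<0$ exactly because $\beta>1/2$, compactness makes $\max_{[0,1]}h<0$; thus $n\pi_n^2 I_n\to0$ and $d_{\text{TV}}\to0$. I expect the binding difficulty to be the uniform upper bound on the shell contributions of $R(x)^2$: the crude estimate $e^{\mu_1\mu_2}\le e^{(\mu_1+\mu_2)^2/4}\le n^{1/2}$ only yields $\beta>3/4$, and reaching the sharp range $\beta\in(1/2,3/4)$ requires tracking how $G_n$ splits its mass between the two draws (a single-big-jump effect for heavy tails), which is where the distributional structure genuinely enters.
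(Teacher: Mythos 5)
Your overall strategy coincides with the paper's. For part (c) the paper simply invokes the first and second Borel--Cantelli lemmas; your quantitative version (calibrating $t_n=\sqrt{2\delta_n\log n}$ with $\delta_n\to 1$, using $\PP(N_1(1)=0)\le e^{-\EE N_1(1)}$ in one direction and Markov plus the exact null limit $1-e^{\log(1-\alpha)}=\alpha$ in the other) is a correct instantiation, and you rightly flag that uniform convergence of $\tau_n$, hence continuity of $\lambda^*$, is what lets you pass from $\lambda_n(\delta_n)$ to $\lambda^*(1)$. For part (b) your argument is essentially the paper's appendix proof: Chebyshev on the binomial count $N(\delta_0)$ at an interior $\delta_0$ with $h(\delta_0)>0$, comparison against the $O(\sqrt{\log\log n})$ null critical value, and Neyman--Pearson for the LRT. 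Two caveats. First, for Berk--Jones, ``the same exceedance drives BJ'' is not quite automatic: the BJ summand is a KL divergence, not a normalized count, and the paper needs the elementary bound $\log(1+x)\ge x/2$ on $(-1/2,1/2)$ together with $F_n(t)/t\to 1$, which in turn requires choosing $\delta_0$ just above the first zero of $h$ so that $\lambda^*(\delta_0)<1-\delta_0$; this selection step should be made explicit. Second, for part (a) the paper proves nothing itself---it cites \citet{cai2014optimal}---and your sketch correctly identifies their truncated second-moment route and, commendably, the exact place where the crude bound $e^{\mu_1\mu_2}\le n^{1/2}$ fails below $\beta=3/4$; but the key estimate $\limsup_n\log_n I_n\le\sup_{\delta\in[0,1]}\left[2\tau^*(\delta)+\delta\right]$ is asserted rather than proved, so as a self-contained argument part (a) still has a hole precisely where the paper defers to the reference.
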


The proof of part (a) of Proposition \ref{prop:lambda-power} is given in \cite{cai2014optimal}. \cite{cai2014optimal} also proved that the higher criticism test enjoys full asymptotic power under the condition of Part (b). For the modified higher criticism test and Berk-Jones tests, the proof is similar and is given in the Appendix for completeness.
Part (c) follows directly from the first and second Borel-Cantelli Lemma.

Proposition~\ref{prop:lambda-power} leaves open the question of what happens in the boundary regime where the supremum converges to 0. Indeed, Section~\ref{subsec:polytail} studies a natural regime with polynomial tails where $\lambda_n(1) \to 0$ and the modified higher criticism test is powerless in the limit even while the other tests enjoy full asymptotic power. 

Note further that the sufficient condition in Theorem~\ref{prop:lambda-power} for the max test to have full asymptotic power is more restrictive than the sufficient condition for the other three. This analysis suggests a disadvantage for the max test, which we illustrate in Figure~\ref{fig:lambda} showing four different $\lambda$ curves plotted against $\frac{1-\delta}{2}$. The black curve takes $G_n$ as a point mass, and shows a bad case for the max test: it rises above $\frac{1-\delta}{2}$ for a range of $\delta$ values that exclude 1. The other three curves, however (taking $G_n$ as Gaussian, exponential, and Cauchy), all show cases where the supremum is achieved at $\delta=1$, so that all of the tests enjoy high power.

Roughly speaking, max test achieves the optimal critical sparsity level if the supremum of $\displaystyle \lambda^*(\delta) - \frac{1-\delta}{2}$ is achieved at $\delta=1$. The following technical lemma connects this supremum with the tail property of $G_n$, and is essential in the proof of Theorem \ref{thm:main}.

\newtheorem{lemma}{Lemma}
\begin{restatable}{lemma}{lemtailbehavior}
\label{lemma:tailbehavior}
\begin{enumerate}[(a)]
    \item For any $\beta>1/2$ and sequence $\{G_n\}$,
    \[
    \sup_{\delta \in (0,1]} \left[\lambda^*(\delta) - \frac{1-\delta}{2}\right] \;\leq\; \max\left\{\lambda^*(1), \;\frac{3}{4}-\beta\right\}
    \]
    \item Under Assumption (A2) of Theorem \ref{thm:main}, 
    \[
    \sup_{\delta \in (0,1]} \left[\lambda^*(\delta) - \frac{1-\delta}{2}\right] \;\leq\; \max\left\{\lambda^*(1), \;\frac{1}{2}-\beta\right\}. 
    \]
\end{enumerate}
\end{restatable}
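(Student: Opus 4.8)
The plan is to reduce both parts to a single variational (Laplace-type) representation of $\tau^*$ and then to a finite-dimensional optimization. Writing each non-null coordinate as $X = \mu + Z$ with $\mu \sim G_n$ and $Z$ standard Gaussian, the tail probability $\PP(\mu + Z > \sqrt{2\delta\log n})$ is governed, on the $\log_n$ scale, by the most likely pairing of a mean value $\mu \approx u\sqrt{2\log n}$ with the Gaussian ``bridging'' the remaining gap. Introducing the tail profile of the rescaled means, $T(u) = \lim_n \log_n \PP_{\mu\sim G_n}(\mu > u\sqrt{2\log n})$, a Laplace argument yields
\[
\tau^*(\delta) = \sup_{0\le u \le \sqrt\delta}\Big[\,T(u) - (\sqrt\delta - u)^2\,\Big],
\]
where $T$ is non-increasing with $T(u)\le 0$ and $T(0)=0$. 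Everything downstream is an exercise in optimizing this expression, and recalling $\lambda^*(\delta) = 1-\beta+\tau^*(\delta)$.

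For part (a), I would substitute $v = \sqrt\delta$ and observe that
\[
\sup_{\delta\in(0,1]}\Big[\lambda^*(\delta) - \tfrac{1-\delta}{2}\Big] = \sup_{0\le u\le v\le 1} F(u,v), \qquad F(u,v) = \tfrac12 - \beta + \tfrac{v^2}{2} + T(u) - (v-u)^2 .
\]
For fixed $u$, $F$ is concave in $v$ (leading coefficient $-\tfrac12$) with unconstrained maximizer at $v = 2u$. If $u \le \tfrac12$, this maximizer is feasible and $F(u,2u) = \tfrac12 - \beta + u^2 + T(u) \le \tfrac34 - \beta$, using $u^2 \le \tfrac14$ and $T \le 0$; if $u > \tfrac12$, then $F$ is increasing on $[u,1]$, so the maximum is at $v=1$ and $F(u,1) \le \sup_{u'} F(u',1) = \lambda^*(1)$. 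Taking the larger of the two regimes gives the bound $\max\{\lambda^*(1), \tfrac34 - \beta\}$. (A sanity check with the point mass recovers the classical threshold: the $u\le\tfrac12$ branch is active precisely when $r \le \tfrac14$, matching the $n^{1/4}$ phenomenon of Donoho--Jin.)

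For part (b), the assumption (A2) lets me sharpen this. Regular variation of $-\log\bar G$ with index $\gamma$ gives, after the scale normalization, $T(u) = -\kappa u^\gamma$ for some constant $\kappa \ge 0$ with $\gamma \le 2$. I would then prove $\tau^*$ is \emph{convex} in $\delta$. Writing $s = \sqrt\delta$ and $\tau^*(\delta) = -\inf_{0\le u\le s}[\kappa u^\gamma + (s-u)^2]$, the substitution $u = sy$ reduces the inner problem to $s^2\min_{y\in[0,1]}[\,c\,y^\gamma + (1-y)^2\,]$ with $c = \kappa s^{\gamma-2}$. By the envelope theorem $\frac{d\tau^*}{d\delta} = -\frac{\sqrt\delta - u^*}{\sqrt\delta} = -(1 - y^*(c))$, where $y^*(c)$ is the minimizer. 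The cross-partial $\gamma y^{\gamma-1} \ge 0$ makes $y^*(c)$ non-increasing in $c$ by monotone comparative statics, and $\gamma \le 2$ makes $c = \kappa s^{\gamma-2}$ non-increasing in $s$; hence $y^*$ is non-decreasing and $\frac{d\tau^*}{d\delta}$ is non-decreasing, i.e. $\tau^*$ is convex. Consequently $f(\delta) := \tfrac12 - \beta + \tfrac\delta2 + \tau^*(\delta)$ is convex, so its supremum over $(0,1]$ is attained at an endpoint:
\[
\sup_{\delta\in(0,1]} f(\delta) = \max\Big\{\lim_{\delta\to 0^+} f(\delta),\; f(1)\Big\} = \max\Big\{\tfrac12 - \beta + \tau^*(0^+),\; \lambda^*(1)\Big\} \le \max\Big\{\tfrac12 - \beta,\; \lambda^*(1)\Big\},
\]
using $\tau^* \le 0$. (This cleanly explains why the point mass fails (b): there $T$ is flat then $-\infty$, making $\tau^*(\delta) = -(\sqrt\delta - \sqrt r)_+^2$ \emph{concave}, so the supremum can sit strictly inside $(0,1]$.)

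The main obstacle, and the part I would expect to require the most care, is justifying the variational representation of $\tau^*$ itself: this is a large-deviations computation for a Gaussian convolution, requiring matching upper and lower tail bounds that are uniform enough to survive passage to the $\log_n$ limit, together with confirming—in the full generality of part (a), where only uniform convergence of $\tau_n$ is assumed—that the profile $T$ is well defined and that the representation holds as an equality (essentially a deconvolution statement). Extracting $T(u) = -\kappa u^\gamma$ from regular variation while controlling the sub-polynomial (logarithmic) correction factors is a secondary but routine technical step. Once the representation is in hand, both parts are the short optimization arguments above.
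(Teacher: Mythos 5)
Your strategy is sound, and the two halves of it relate to the paper's proof differently. The variational representation you defer is exactly the paper's Lemma~\ref{lemma:tail} (in the paper's variables $t = u/\sqrt{\delta}$, so that $\tau_n(\delta) = \sup_{0\le t\le 1}[-Q_n(t\sqrt{2\delta\log n})/\log n - \delta(1-t)^2] + O(\log\log n/\log n)$), and it is proved there by exactly the matching upper/lower Gaussian-convolution bounds you anticipate. Your part (a) is then essentially the paper's argument reorganized: the paper argues by contradiction, showing a maximizer $(\delta^*_n,t^*_n)$ with $g_n(\delta^*_n,t^*_n) > \max\{h_n(1),1/4\}$ must satisfy both $\tfrac{1}{4\delta^*_n} < \tfrac12-(1-t^*_n)^2$ (your ``drop $T\le 0$ and use $v=2u$'' branch) and $\tfrac{1}{4\delta^*_n} > (2t^*_n-\tfrac12)^2$ (your ``compare with $v=1$ at the same $u$'' branch, i.e.\ $g_n(\delta^*_n,t^*_n)$ versus $g_n(1,t^*_n\sqrt{\delta^*_n})$); your direct two-case optimization is cleaner but identical in substance. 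Your part (b) is a genuinely different route: the paper again argues by contradiction, first pinning $t^*_n\sqrt{\delta^*_n}$ into a compact interval bounded away from $0$ and then invoking the uniform convergence theorem for regularly varying functions to compare $g_n(\delta^*_n,t^*_n)$ with $g_n(1,1/2)$, whereas you prove convexity of $\tau^*$ in $\delta$ via comparative statics on the rescaled inner problem and read off the endpoint bound. The convexity argument is attractive because it explains structurally why $\gamma\le 2$ is the right condition (it is what makes $c=\kappa s^{\gamma-2}$ monotone) and why the point mass fails.

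Two deferred steps deserve more than the ``routine'' label you give them, and they are where the paper spends most of its effort on part (b). First, the identity $T(u)=-\kappa u^{\gamma}$ presupposes that $\kappa=\lim_n Q(\sqrt{2\log n}\,\sigma_n^{-1})/\log n$ exists in $[0,\infty]$; this is not implied by the hypotheses and must be handled by passing to subsequences and treating the cases $\kappa=0$ (the paper's scenario $\limsup\sqrt{2\log n}\,\sigma_n^{-1}<\infty$) and $\kappa=\infty$ separately --- both are consistent with your endpoint bound, but they fall outside the $T(u)=-\kappa u^\gamma$, $\kappa\in[0,\infty)$ form as you state it. Second, the interchange of $\sup_u$ with the $n\to\infty$ limit requires uniformity of the regular-variation limit, which holds on compacta bounded away from $0$ but not near $u=0$; the paper's localization of the maximizer (showing $t^*_n\sqrt{\delta^*_n}\in[\tfrac12\sqrt{\epsilon/2},\tfrac12]$) is doing precisely this work, and an analogous localization would be needed to make your limiting profile argument rigorous. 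For part (a) these issues evaporate if you run your two-case optimization at finite $n$ with $T_n$ in place of $T$, since that argument uses only $T_n\le 0$ and monotonicity; I would recommend stating it that way. With these repairs the proof goes through.
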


The proof of the lemma is given in Section \ref{sec:proof}. Theorem \ref{thm:main} is then a direct result of Lemma \ref{lemma:tailbehavior} and Proposition \ref{prop:lambda-power}.

\begin{proof}[Proof of Theorem \ref{thm:main}]

First, if $\beta < \beta^*$, then by definition of $\beta^*$,
\begin{equation}
\label{eq:betadiff}
\sup_{\delta \in (0,1]} \left[\lambda^*(\delta) - \frac{1-\delta}{2}\right] \geq \beta^* - \beta > 0.
\end{equation}

Suppose that $\beta^* > 3/4$ and $\beta < \beta^*$. Since the power of the max test is non-increasing in $\beta$, we can assume without loss of generality that $\beta > 3/4$. Since 
$3/4-\beta < 0$, we can combine \eqref{eq:betadiff} with part 1 of Lemma~\ref{lemma:tailbehavior} to conclude that $\lambda^*(1)>0$, implying that the max test has full asymptotic power. If Assumption (A2) of Theorem~\ref{thm:main} holds, then we can repeat the same argument replacing 3/4 with 1/2 and applying part 2 of Lemma~\ref{lemma:tailbehavior} instead of part 1.

\end{proof}

\begin{figure}
    \centering
    \includegraphics[width=.6\textwidth]{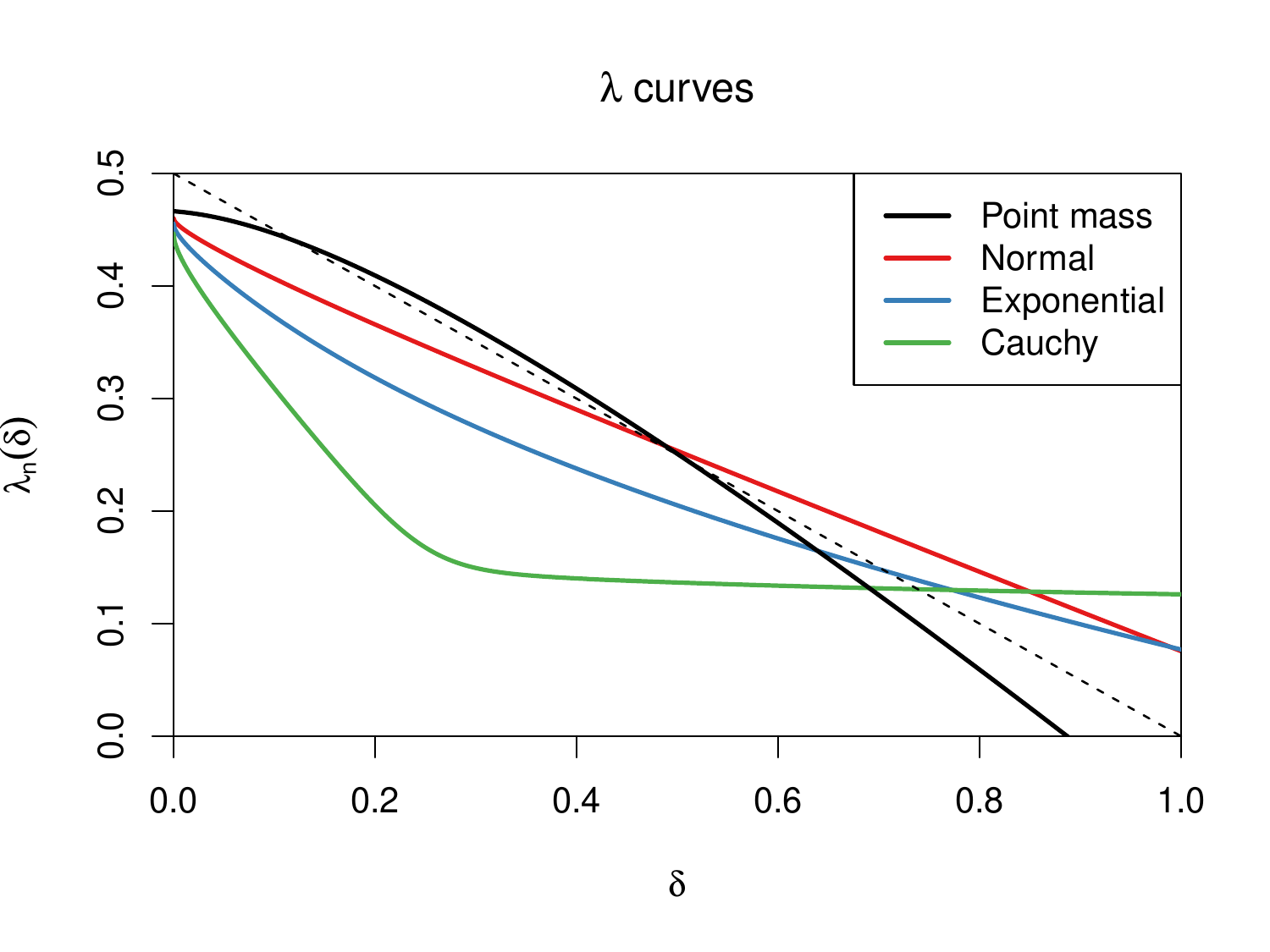}
    \caption{$\lambda_n(\delta)$ curves plotted against $\frac{1-\delta}{2}$ for four different tests, for $\lambda_n(\delta)$ as defined in \eqref{eq:lambdan}. If a curve rises above $\frac{1-\delta}{2}$ for some values of $\delta$ that exclude 1, then the likelihood ratio test has full asymptotic power, while the max test does not. The black curve, which takes $G_n$ as a point mass, shows this scenario. The other curves show case where the supremum $\lambda_n(\delta)-\frac{1-\delta}{2}$ of is achieved at $\delta=1$, so that the max test, the higher criticism test and the likelihood ratio test all enjoy full asymptotic power.}
    \label{fig:lambda}
\end{figure}


\subsection{Power analysis for polynomial tails}
\label{subsec:polytail}

Theorem \ref{thm:main} does not characterize the power of different tests in the boundary regime. We now study a natural regime with polynomial tails, with $\beta = \beta^*$. 
The boundary regime with polynomial tails is more interesting because we have shown in Corollary \ref{cor:density:pareto} that there is not a sharp detection boundary for a scale parameter, but rather in the growth rate $\rho$ where $\sigma_n = n^{\rho}$.

In this section we explore a sequence of alternative distributions growing at the critical rate $\rho$, and parametrized by a scale parameter $r$. Under this sequence of alternatives, we will show that the asymptotic power of level-$\alpha$ max test is a smooth function of $r\in(0, \infty)$, and converges to 1 as $r\to\infty$.
In addition, we will show that the modified higher criticism test is asymptotically powerless no matter what $r$ is.

Suppose that $G_n(\mu) = G(\mu/\sigma_n$), where $G$ is the $t$ distribution with $\nu$ degrees of freedom. Then the density function $g(\theta)=\Theta(\theta^{-\nu-1}).$ Recall from Corollary \ref{cor:density:pareto} that if $\liminf_n\log_n\sigma_n > (\beta-1)/\nu$, then the max test and higher criticism test both have full asymptotic power. If $\limsup_n\log_n\sigma_n < (\beta-1)/\nu$, then both tests are powerless. Therefore, to study the boundary regime, we are interested in the case where $\lim_n\log_n\sigma_n = (\beta-1)/\nu$.
 Fix $\beta\in (1/2, 1)$, and let
$$\sigma_n=\frac{r\sqrt{2\log n}}{n^{(1-\beta)/\nu}},r\in(0, \infty).$$ 
Then it can be verified that the power of the max test has smooth transition from $\alpha$ to 1 as $r$ goes from 0 to $\infty$. The higher criticism test also shares this smooth transition behavior, as the rejection threshold for $p_{(1)}$ in the higher criticism statistic
is very close to $\alpha/n$.
Perhaps surprisingly, the modified higher criticism test is asymptotically powerless in this case, as detailed by the following theorem.


\begin{restatable}{theorem}{polytail}

\label{thm:polytail}
Suppose that $G$ satisfies $\lim_{\mu\to\infty}(1-G(\mu))\mu^{\nu}= \lim_{\mu\to\infty}G(-\mu)\mu^{\nu}= C$ with tail index $\nu>0$, and $\displaystyle \sigma_n=\frac{r\sqrt{2\log n}}{n^{(1-\beta)/\nu}}$ for some $\beta\in (1/2, 1)$. Then $\beta^* = \beta$, and
\begin{enumerate}
    \item the asymptotic power of the level-$\alpha$ max test, is
    \[
    \lim_{n\to\infty}\mathbb{P}_{H_1}(\text{reject }H_0) = 1-e^{-2C r^{\nu} + \log(1-\alpha)}.
    \] 
     In particular, the power tends to 1 as $r\to\infty$.
    \item for any $r\in (0, \infty)$, the modified higher criticism is asymptotically powerless.
\end{enumerate}
\end{restatable}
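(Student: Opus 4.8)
The plan is to treat the two halves separately, since the max test calls for a precise Poisson (extreme-value) analysis at the threshold, whereas the modified higher criticism calls for control of an empirical process near the left endpoint $t=1/n$.

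I first establish $\beta^*=\beta$ together with the max-test power. The starting point is the limiting tail exponent $\tau^*(\delta)=\lim_n \log_n \PP_{\mu\sim G_n}(\mu+Z>\sqrt{2\delta\log n})$. Because $G$ has a polynomial tail, the event $\{\sigma_n\theta+Z>\sqrt{2\delta\log n}\}$ is driven by large $\theta$, so $\PP(\sigma_n\theta+Z>\sqrt{2\delta\log n})\sim C\,\sigma_n^{\nu}(\sqrt{2\delta\log n})^{-\nu}$; substituting $\sigma_n=r\sqrt{2\log n}\,n^{-(1-\beta)/\nu}$ gives $\tau^*(\delta)=\beta-1$ for every $\delta\in(0,1]$, hence $\lambda^*(\delta)=1-\beta+\tau^*(\delta)=0$. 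Treating the sparsity exponent as a free parameter $b$ (so $\lambda^*_b(\delta)=\beta-b$) and feeding this into Proposition~\ref{prop:lambda-power} shows $d_{\text{TV}}\to1$ for $b<\beta$ and $d_{\text{TV}}\to0$ for $b>\beta$, which yields $\beta^*=\beta$.

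For the exact power I would calibrate the level-$\alpha$ threshold $t_n$ by $2n(1-\Phi(t_n))\to\log\tfrac{1}{1-\alpha}$, so $t_n=\sqrt{2\log n}\,(1+o(1))$, and study the exceedance count $N(t_n)=\#\{i:|X_i|>t_n\}$, noting the max test rejects iff $N(t_n)\ge1$. Writing each coordinate's exceedance probability as $(1-\pi_n)\cdot2(1-\Phi(t_n))+\pi_n\cdot\PP(|\sigma_n\theta+Z|>t_n)$, the null part contributes total mean $\to\log\tfrac{1}{1-\alpha}$, while the signal part contributes total mean $n\pi_n\cdot 2C\sigma_n^{\nu}t_n^{-\nu}\to 2Cr^{\nu}$ (using $t_n^{\nu}\sim(2\log n)^{\nu/2}$ to cancel the $\sigma_n^{\nu}$ factor). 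Since $N(t_n)$ is a sum of $n$ independent rare indicators, the Poisson limit theorem gives $N(t_n)\Rightarrow\mathrm{Poisson}\!\big(\log\tfrac{1}{1-\alpha}+2Cr^{\nu}\big)$, so the rejection probability tends to $1-(1-\alpha)e^{-2Cr^{\nu}}=1-e^{-2Cr^{\nu}+\log(1-\alpha)}$, which $\to1$ as $r\to\infty$. The one technical point is justifying $\PP(\sigma_n\theta+Z>t_n)\sim C\sigma_n^{\nu}t_n^{-\nu}$ with the \emph{exact} constant even though both $t_n$ and $\sigma_n$ vary with $n$; I would condition on $Z$ and use the uniform convergence theorem (Potter bounds) for regularly varying tails to pass $\EE_Z[(1-Z/t_n)^{-\nu}]\to1$ under dominated convergence.

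The powerlessness of the modified higher criticism is the main obstacle. Under the null $mHC_n$ has a Darling--Erd\H{o}s--type limit: after centering and scaling by $a_n\asymp(\log\log n)^{-1/2}$ and $b_n\asymp\sqrt{2\log\log n}$, the statistic $(mHC_n-b_n)/a_n$ converges to a Gumbel law, so the level-$\alpha$ rejection region is an interval of width only $O(a_n)$ in the statistic. It is therefore \emph{not} enough to show the signal shifts $mHC_n$ by $o_P(1)$; I must show $mHC_n^{H_1}-mHC_n^{H_0}=o_P(a_n)$, where $mHC_n^{H_0}$ reuses the same Gaussian noise with the means zeroed out. The plan is to bound the per-$t$ discrepancy $|D_n(t)|/\sqrt{nt(1-t)}$, with $D_n(t)=\sum_{i:\mu_i\neq0}[\mathbf 1\{|\mu_i+Z_i|>s_t\}-\mathbf 1\{|Z_i|>s_t\}]$ and $s_t=\Phi^{-1}(1-t/2)$, and to show the signal contribution concentrates at the extreme left of the range: its mean at $t$ is $\approx 2Cr^{\nu}(nt)^{-1/2}$, so it is $\Omega(1)$ only when $nt=O(1)$ (the smallest few $p$-values) and is $o(a_n)$ once $nt\to\infty$ polynomially. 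Splitting at $\eta_n=n^{-1+\epsilon}$, I would argue that on $[\eta_n,1/2]$ the slowly varying $\log\log$ factor keeps the truncated supremum equal to $\sqrt{2\log\log n}\,(1+o(1))$ while both the mean shift ($\le 2Cr^{\nu}n^{-\epsilon/2}$) and the fluctuation of $D_n$ are $o_P(a_n)$, so the restricted statistic matches the null limit; and that on $[1/n,\eta_n]$ the whole standardized process, signal included, is $O_P(1)$ and stays below $b_n$ with probability tending to one, so it never supplies the supremum. Combining the two regimes gives $mHC_n^{H_1}$ the same Gumbel limit as $mHC_n^{H_0}$, hence asymptotic power $\alpha$. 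I expect the second step --- a uniform upper bound on the standardized empirical process on $[1/n,\eta_n]$ under the alternative, ruling out the few heavy-tailed extreme coordinates conspiring to raise the supremum --- to be the most delicate part of the argument.
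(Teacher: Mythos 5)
Your treatment of $\beta^*=\beta$ and of the max test is essentially the paper's argument: the paper likewise reduces everything to the sharp two-sided tail estimate $\PP_{\mu\sim G_n}\left(|X|>\sqrt{2\delta\log n}\right)\sim 2C(r/\sqrt{\delta})^{\nu}n^{-(1-\beta)}$ and then computes the rejection probability exactly (your Poisson limit is equivalent to the paper's $1-(1-p_n)^n$ computation). The technical point you flag is real but is handled exactly as you suspect: the paper truncates the integral over $Z$ at $\sqrt{2\delta\log n}-1$ so that the argument of $1-G$ is at least $1/\sigma_n\to\infty$, applies the uniform polynomial-tail asymptotic there, and bounds the leftover piece by $1-\Phi(\sqrt{2\delta\log n}-1)$, which is negligible relative to $n^{-(1-\beta)}$ for the relevant $\delta$. (You cannot literally dominate $\EE_Z[(1-Z/t_n)^{-\nu}]$ without such a truncation, since the integrand blows up as $Z\uparrow t_n$.)

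Your modified higher criticism argument has a genuine gap, located precisely at the choice of split point. With $\eta_n=n^{-1+\epsilon}$, the interval $[1/n,\eta_n]$ spans $\epsilon\log n$ units in the logarithmic time scale $s=\log(1/t)$, in which the standardized null empirical process behaves like a stationary Ornstein--Uhlenbeck process. Consequently $\sup_{1/n\le t\le\eta_n}W_n(t)$ is of order $\sqrt{2\log(\epsilon\log n)}=\sqrt{2\log\log n}\,(1+o(1))$ --- the same leading order as the full supremum --- and it exceeds the level-$\alpha$ critical value with probability tending to $1-(1-\alpha)^{\epsilon}>0$. So the claim that the process on $[1/n,\eta_n]$ is $O_P(1)$ and stays below $b_n$ with probability tending to one is false; and, symmetrically, discarding that range shifts the Gumbel centering of the remaining supremum by $\Theta(a_n)$, so the restricted statistic on $[\eta_n,1/2]$ does not share the limit law of $mHC_n^{H_0}$. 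Both halves of your splitting argument therefore fail. The paper's fix is to split at a poly-logarithmic multiple of $1/n$, namely $q_n=(\log n)^3/(2n)$: by Lemmas 3 and 4 of Jaeschke, $\sup_{1/n<t<(\log n)^3/n}W_n(t)=o_P(\sqrt{\log\log n})$, so that range genuinely cannot supply the supremum even after the (merely bounded) signal distortion there, while on $[q_n,1/2]$ the distortion is $O((\log n)^{-3/2})$, which is $o(a_n)$. The paper also sidesteps your need for a uniform maximal inequality on the difference process $D_n(t)$: it applies the quantile transform $\tilde p_i=F_n(p_i)$, which represents the alternative statistic exactly as $\sup_t\{A_n(t)W_n(t)+B_n(t)\}$ with $W_n$ an exactly null process and $A_n,B_n$ deterministic, so only deterministic distortion functions need to be bounded.
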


We note that for fixed $r$, the power of the max test as $n$ goes to infinity does not depend on the sparsity parameter $\beta$. This is because $\sigma_n$ is a decreasing function of the sparsity level $\beta$, thereby implicitly adjusting for the sparsity level. 

Compared to the original higher criticism test, the modified higher criticism test was designed to ignore $p-$values smaller than $1/n$. These small $p$-values cause the original higher criticism statistics to have a heavy right tail under the null distribution, and the modified higher criticism test is considered in \cite{donoho2004higher} as a refined test with potentially better finite sample performance. However, this modification also makes the modified higher criticism test powerless in situations where the smallest $p$-values provide the best evidence against the null. Recall that $\lambda_n(\delta)$ is defined as the log of the expected number of non-null observations that are greater than $\sqrt{2\delta\log n}$.
In Theorem \ref{thm:polytail}'s setting, the proof of Corollary~\ref{cor:density:pareto} shows that
$\lambda^*(\delta) = 0$ for all $\delta\in (0, 1]$; as a result $\lambda^*(\delta) < (1-\delta)/2$ for all $\delta < 1$.
In other words, evidence against the null is only present in the number of tail values exceeding $\sqrt{2\log n}$, which is roughly the Bonferroni threshold. Because the $p$ values of these observations are smaller than $1/n$, they are effectively truncated by the modified higher criticism test, making it asymptotically powerless. The original higher criticism test, however, is still powerful because, like the max test, it can reject on the strength of the largest $p$-value alone. A full proof of Theorem~\ref{thm:polytail} is given in the Appendix.


\section{Numerical results}
\label{sec:simulation}

We now provide simulation results showing that the max test has similar power as the higher criticism test when the distribution of non-null signals has Gaussian or heavier tails. We generate data under the following alternative:
\begin{align*}
X_i\simind N(\mu_i, 1), \mu_i\simiid G_n, \quad  &\text{for } i=1, \dots, n_1\\
X_i\simind N(0, 1),\quad &\text{for } i=n_1+1, \dots, n.
\end{align*}
In this section, we consider the case where $G_n$ has either exponential or Cauchy tail. In the appendix, we provide additional simulation results for other distributions $G_n$, including Gaussian distribution. We take $n=50,000$ and $n_1=\lfloor n^{1-\beta}\rfloor$, where the sparsity parameter $\beta$ ranges from 0.1 to 0.9. We compare the power of the following 6 tests: the max test, the higher criticism test, the modified higher criticism test, the Berk-Jones test, the $\chi^2$ test and a hybrid test which combines the max test and the $\chi^2$ test. The rejection region of the level $\alpha$ hybrid test has the form
\[
\left\{\max_{i}|X_i| > m(n, \alpha/2)\right\}\cup\left\{\sum_{i}X_i^2 > c(n, \alpha/2)\right\},
\]
where $m(n, \alpha/2)$ and $c(n, \alpha/2)$ are the $1-\alpha/2$ quantiles of $\max_{i}|X_i|$ and $\sum_{i}X_i^2$ under the null. For all 6 tests, we control Type-I error at $\alpha = .05$. For the first five tests, we use the empirical 95\% percentile of the test statistics under the null distribution as the cutoff value; for the hybrid test, we use the empirical 97.5\% percentile of $\max_{i}|X_i|$ and $\sum_{i}X_i^2$ to estimate the threshold $m(n, \alpha/2)$ and $c(n, \alpha/2)$. Our results are summarized below.


\vspace{2mm}
\noindent{\textbf{When $\bm{G_n}$ has exponential tail}} In particular, we choose $G_n=\text{Laplace}(0, r)$. The power of all six tests are shown in Figure \ref{fig:comp}. First, we found that when $\beta\leq 0.3$, the $\chi^2$ test (yellow curve) outperforms all five others, and the max test is least powerful due to relatively dense signals.
Second, the modified higher criticism test has very low power when $\beta>0.5$.
Since the modified higher criticism test does not use the $p$-values smaller than $1/n$, it performs subpar in the sparse regime where the max test and the higher criticism test reject the null based on those $p$-values. 
Third, when $\beta>0.5$ the power of the max test, the higher criticism test and the Berk-Jones test are very similar. This finding agrees with our Theorem 1, which states that the max test achieves the optimal critical sparsity level for exponentially distributed alternatives when $\beta>0.5$.
Finally, the hybrid test, which combines the max test and the $\chi^2$ test, performs on par if not better than the higher criticism under all sparsity regimes. 



\begin{figure}
\centering
\includegraphics[width = .8\textwidth]{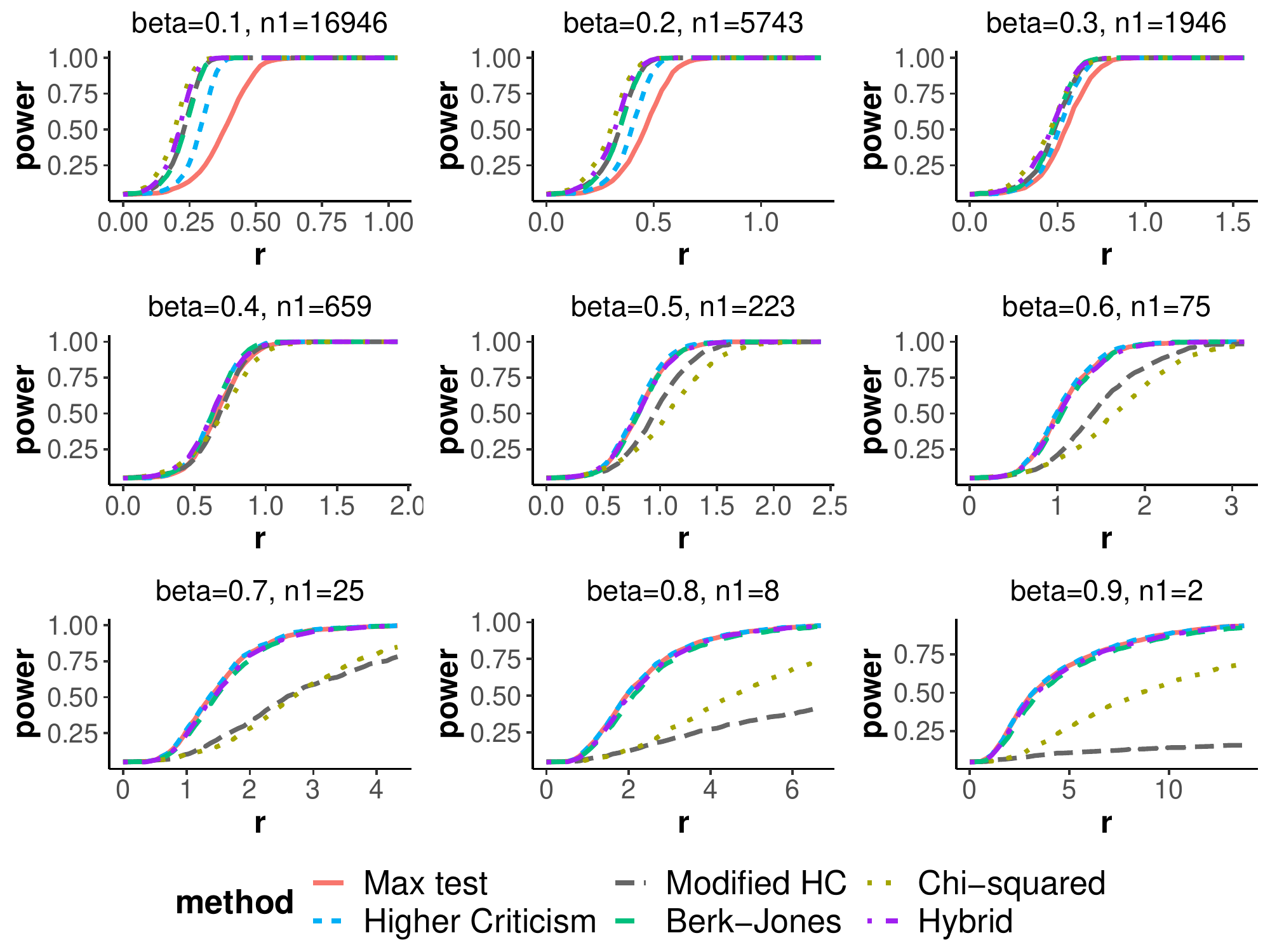}
\caption{Comparison of power for different methods: max test (red curve), higher criticism (light blue curve), modified higher criticism (grey curve), Berk-Jones (green curve) ,$\chi^2$ test (yellow curve) and the hybrid test (purple curve). Here $n=50,000$ with $n_1 = \lfloor n^{1-\beta}\rfloor$ non-null means drawn from $\text{Laplace}(0, r)$. The horizontal axis shows the value of $r$ while the vertical axis shows power. 
}
\label{fig:comp}
\end{figure}

\vspace{3mm}
\noindent{\textbf{When $\bm{G_n}$ has polynomial tail}} In particular, we choose $\displaystyle {G_n=}\text{Cauchy}{\left(0, \frac{r\sqrt{2\log n}}{n^{(1-\beta)}}\right)}$.  Recall that according to Theorem \ref{thm:polytail}, under this setting the max test and the higher criticism should have very high power when $r$ is big, while modified higher criticism should have little power. Indeed, the max test, the higher criticism test, the Berk-Jones test and the hybrid test have almost identical power for all combinations of $(\beta, r)$, and the modified higher criticism performs worst among all tests. All of these findings are consistent with our Theorem \ref{thm:main}.
We also notice that for fixed $r$ value, the power of max test, higher criticism and Berk-Jones are almost constant for different parameter $\beta$.  This finding also agrees with the asymptotic power of max test in Theorem \ref{thm:polytail}.

Appendix~\ref{app:additional-simulations} gives analogous simulations for Gaussian, logistic, $\chi^2(1)$, $t_3$, and $t_5$ distributions, with qualitatively similar results. Overall, our simulation confirms that the higher criticism test does not have better finite sample power than the max test when the max test achieves the optimal critical sparsity level. On the other hand, when the higher criticism does have better power over the max test, the non-null signals are likely dense enough such that the $\chi^2$ test is even more powerful.

\begin{figure}
\centering
\includegraphics[width = .8\textwidth]{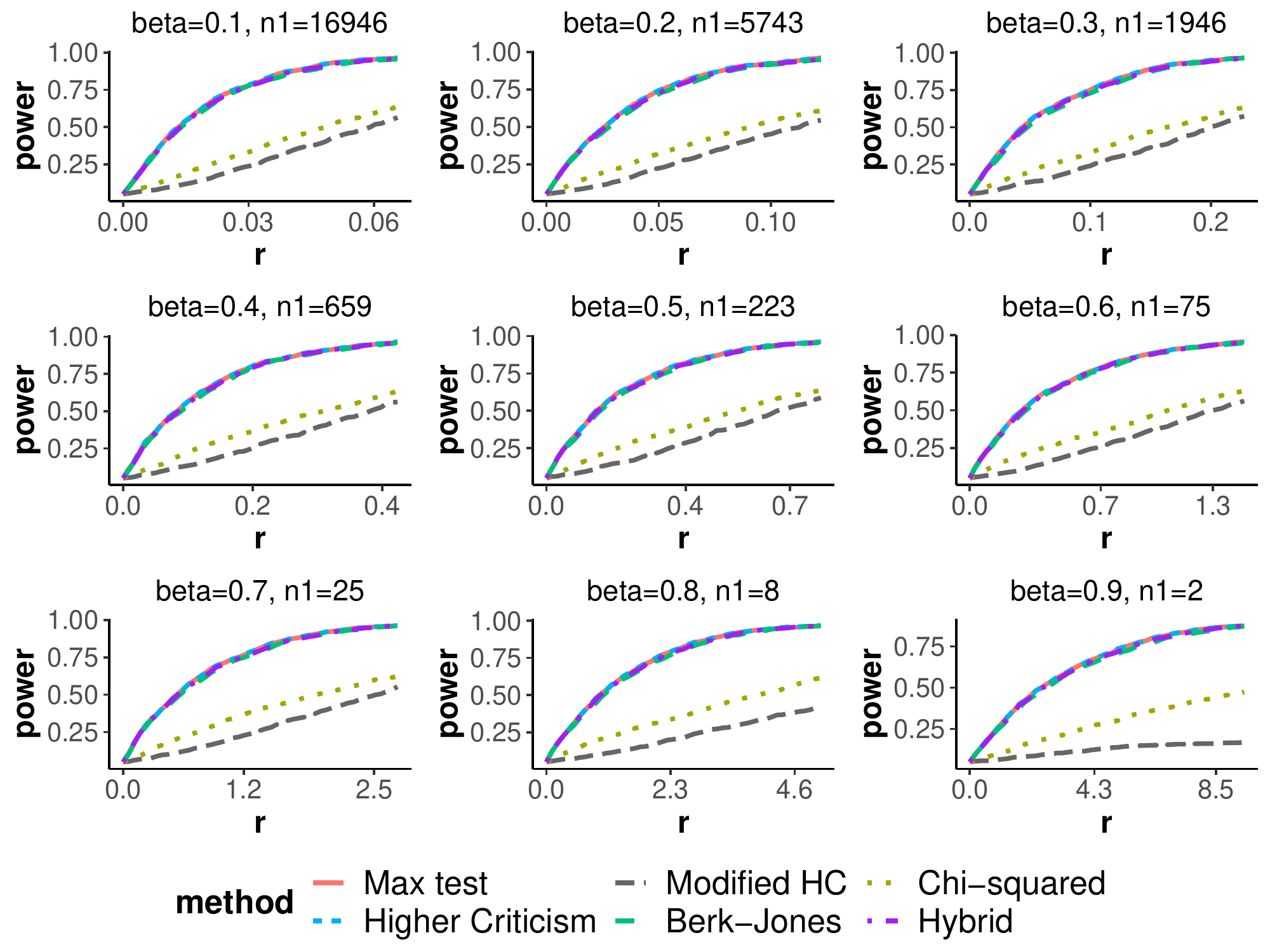}
\caption{Comparison of power for different methods, where  $n=50,000$ with $n_1 = \lfloor n^{1-\beta}\rfloor$ non-null means drawn from $\text{Cauchy}(0, r\sqrt{2\log n}n^{-(1-\beta)})$. The horizontal axis shows the value of $r$ while the vertical axis shows power. }
\label{fig:cauchy}
\end{figure}

\section{Discussion}

We have shown, theoretically and numerically, that the max test has optimal asymptotic behavior in the sparse regime, provided that the distribution of non-null signals has a tail no lighter than Gaussian. In addition, the max test dominates the modified higher criticism test when the distribution of nonzero signals has polynomial tails. We believe our results complicate the conventional wisdom that the max test is a substandard test for the purpose of signal detection and suggest that in many applied settings practitioners will not suffer low performance by using the max test. In these settings, the max test can be derived as a ``free" (incurring no additional FWER) deduction from simultaneous confidence intervals for the coordinates of $\mu_i$.

The higher criticism has been generalized to many interesting statistics problems beyond the signal detection problem studied here. It is an interesting question for future work whether in many of these cases it may be possible to find an analogous generalization of the max test whose performance matches the higher-criticism-type test.

Like other papers in this line of research, our paper did not address the ``weak, dense" regime, where the sparsity parameter $\beta$ is smaller than $1/2$. It is well known that in the dense regime, the $\chi^2$ test has higher power than the higher criticism and max test when the distribution of non-null means is a point mass. We have suggested a hybrid test based on combining the p-values of the $\chi^2$ and the max test, and shown numerical evidence that it performs well throughout the sparse and dense regimes. By inverting this hybrid test, we can obtain a joint confidence region for $\mu \in \mathbb{R}^n$ that is the union of an $\ell_2$ ball and an $\ell_\infty$ ball around the observed $X$, simultaneously giving short intervals for coordinates of $\mu_i$ and reasonable intervals for all linear combinations of $\mu$. Finding a test that achieves the optimal critical sparsity level under the general model in this regime is a interesting direction for future research.



\section{Proofs of main results}
\label{sec:proof}

We begin by proving the following result on the tail probability of $X\sim N(\mu, 1), $ where $\mu$ is generated from some distribution $G_n$. This is a standard result and is repeated here for completeness of the proof.
\begin{lemma}
\label{lemma:tail}
Let $\bar{G}_n(\theta) = 1-G_n(\theta)$. Under the alternative hypothesis \eqref{eq:althypo},  
we have
\[
\tau_n(\delta)=\sup_{0\leq t\leq 1}-\frac{Q_n(t\sqrt{2\delta\log n})}{\log n}-\delta(1-t)^2 + O\left(\frac{\log \log n}{\log n}\right),
\]
where $Q_n(\theta) = -\max\{\log \bar{G}_n(\theta), \log G_n(-\theta)\}$ and the $O\left(\frac{\log \log n}{\log n}\right)$ term is uniform over all $\delta \in [0,1]$.
\end{lemma}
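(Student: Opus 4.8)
The plan is to recognize $\tau_n(\delta) = \log_n \mathbb{P}(|X| > u_n)$, where $u_n = \sqrt{2\delta\log n}$ and $X = \mu + Z$ with $\mu \sim G_n$ and $Z \sim N(0,1)$ independent, and then carry out a Laplace-type (large-deviations) estimate. Writing $\bar\Phi = 1 - \Phi$, I would first split into the two one-sided tails,
\[
\mathbb{P}(|X| > u_n) = \mathbb{E}_\mu\!\left[\bar\Phi(u_n - \mu)\right] + \mathbb{E}_\mu\!\left[\bar\Phi(u_n + \mu)\right],
\]
so that the right tail of $X$ is controlled by the right tail $\bar G_n$ of $G_n$ and the left tail of $X$ by $G_n(-\cdot)$. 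Substituting $\mu = t u_n$, the Gaussian factor contributes $\bar\Phi((1-t)u_n) \approx e^{-\delta(1-t)^2 \log n}$ while the mass of $G_n$ near $t u_n$ contributes $\max\{\bar G_n(t u_n), G_n(-t u_n)\} = e^{-Q_n(t u_n)}$; the claimed formula is exactly the $\log_n$ of the product, optimized over the ``location'' $t$. The proof then amounts to matching an upper and a lower bound on $\log \mathbb{P}(|X| > u_n)$ up to an additive $O(\log\log n)$.

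For the lower bound, fix a near-maximizer $t^\star \in [0,1]$ of the target expression. Restricting the first tail integral to $\mu \ge t^\star u_n$ and using that $\bar\Phi(u_n - \mu)$ is increasing in $\mu$ gives
\[
\mathbb{P}(X > u_n) \ge \bar\Phi\big((1-t^\star)u_n\big)\,\bar G_n(t^\star u_n),
\]
and symmetrically $\mathbb{P}(X < -u_n) \ge \bar\Phi((1-t^\star)u_n)\,G_n(-t^\star u_n)$; taking the larger of the two yields the factor $e^{-Q_n(t^\star u_n)}$. Applying the standard Gaussian lower bound $\bar\Phi(x) \ge \frac{x}{(1+x^2)\sqrt{2\pi}}e^{-x^2/2}$ for $x>0$ (and a constant lower bound when $(1-t^\star)u_n$ is bounded) shows $\log \bar\Phi((1-t^\star)u_n) = -\delta(1-t^\star)^2\log n + O(\log\log n)$, since the polynomial corrections are $O(\log u_n) = O(\log\log n)$. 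Dividing by $\log n$ and optimizing over $t^\star$ gives the ``$\ge$'' direction.

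The upper bound is the main work, and I would peel the $\mu$-axis into three regions. On $\mu \le 0$ we have $\bar\Phi(u_n - \mu) \le \bar\Phi(u_n) \le e^{-u_n^2/2} = n^{-\delta}$, matching the $t=0$ endpoint. On $\mu > u_n$ (i.e.\ $t > 1$) we bound $\bar\Phi(u_n - \mu) \le 1$, so the contribution is at most $\bar G_n(u_n) \le e^{-Q_n(u_n)}$, matching the $t=1$ endpoint. On the central region $0 < \mu \le u_n$ I would discretize $t = \mu/u_n$ into a grid of spacing $\Delta = 1/\log n$, giving $O(\log n)$ slabs; on the slab $\mu \in [t_j u_n, (t_j+\Delta)u_n]$ the integrand is at most $\bar\Phi((1 - t_j - \Delta)u_n)\,\bar G_n(t_j u_n) \le e^{-\delta(1-t_j-\Delta)^2\log n}\,e^{-Q_n(t_j u_n)}$ using $\bar\Phi(x)\le e^{-x^2/2}$. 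Adding the analogous bound for the left tail via $G_n(-t_j u_n)$ and using $\bar G_n + G_n(-\cdot) \le 2e^{-Q_n}$, the sum over the $O(\log n)$ slabs is at most $O(\log n)$ times the maximal slab value. Since replacing $(1-t_j-\Delta)^2$ by $(1-t_j)^2$ perturbs the exponent by only $O(1/\log n)$, and since the grid-supremum is dominated by the continuous supremum over $[0,1]$, taking $\log_n$ produces $\tau_n(\delta) \le \sup_{t\in[0,1]}[-Q_n(tu_n)/\log n - \delta(1-t)^2] + O(\log\log n/\log n)$, with the error absorbing both $\log(\text{number of slabs})$ and the Gaussian corrections.

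The step I expect to be most delicate is this discretization: I must ensure that the slab count, the within-slab variation of the $\bar\Phi$ factor, and the Gaussian-tail corrections are all $O(\log\log n)$ in $\log \mathbb{P}$ — hence $O(\log\log n/\log n)$ in $\tau_n$ — and crucially that this holds \emph{uniformly} over $\delta \in [0,1]$. Uniformity follows because $u_n \le \sqrt{2\log n}$ for all $\delta \le 1$, so both the slab count $O(\log n)$ and the correction $O(\log u_n) = O(\log\log n)$ are independent of $\delta$. A convenient feature of the peeling argument is that it never requires $Q_n$ to be monotone, Lipschitz, or to attain its optimum at an interior point; the slab bound automatically yields a grid-supremum dominated by the continuous supremum, so no regularity of $G_n$ beyond the statement is needed.
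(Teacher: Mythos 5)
Your proposal is correct and follows essentially the same route as the paper: the identical restriction-to-$\{\mu\ge t u_n\}$ lower bound with a standard Gaussian tail lower bound costing only $O(\log\log n)$ in the exponent, and the identical three-region split ($\mu\le 0$, $0<\mu\le u_n$, $\mu>u_n$) for the upper bound. The only difference is cosmetic: where you control the central region by discretizing $t$ into $O(\log n)$ slabs and taking a union bound, the paper integrates $-\int_0^{u_n}\bar\Phi(u_n-\mu)\,d\bar G_n(\mu)$ by parts and bounds the resulting ordinary integral by $2\log n$ times the supremum of its integrand; both yield the same $O(\log n)$ multiplicative loss, hence the same uniform $O(\log\log n/\log n)$ term.
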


\begin{proof}
For any $0\leq t, \delta \leq 1$, we have
\begin{align*}
\mathbb{P}_{\mu\sim G_n}\left(X\geq \sqrt{2\delta\log n}\right)
    &\;\geq\; \mathbb{P}_{\mu\sim G_n}\left(X\geq \sqrt{2\delta\log n}, \mu\geq t\sqrt {2\delta \log n}\right)\\
&\;=\; \mathbb{P}_{\mu\sim G_n}\left(\mu\geq t\sqrt{2\delta \log n}\right) \mathbb{P}_{\mu\sim G_n}\left(X\geq \sqrt{2\delta\log n}\;\mid\;\mu \geq t\sqrt{2\delta \log n}\right)\\
&\;\geq\; \left(1-G_n({t\sqrt{2\delta \log n}})\right) \left(1-\Phi((1-t)\sqrt{2\delta\log n})\right)\\
&\;\geq\; \frac{1}{6\sqrt{2\log n}}\,\exp\left\{\log\bar{G}_n\left(t\sqrt{2\delta\log n}\right)-(1-t)^2\delta\log n\right\}, 
\end{align*}
where the last inequality follows from the fact that
$1-\Phi(x)\geq \frac{1}{3(x+1)}e^{-x^2/2}$
for any $x>0$. Taking the supremum over $t\in [0, 1],$ we have
\[
\mathbb{P}_{\mu\sim G_n}\left(X\geq \sqrt{2\delta\log n}\right)\;\geq\; 
\frac{1}{6\sqrt{2\log n}}\,\exp\left\{\sup_{0\leq t \leq 1} \log\bar{G}_n\left(t\sqrt{2\delta\log n}\right)-(1-t)^2\delta\log n\right\}.
\]
On the other hand, Fubini's theorem yields
\begin{align}
&\mathbb{P}_{\mu\sim G_n}\left(X\geq \sqrt{2\delta\log n}\right) \nonumber\\
= \;\ & -\int_{-\infty}^{\infty}\bar\Phi(\sqrt{2\delta\log n}-\mu)d\bar{G}_n(\mu) \nonumber\\
= \;\ & -\int_{-\infty}^{0}\bar\Phi(\sqrt{2\delta\log n}-\mu)d\bar{G}_n(\mu) -\int_{0}^{\sqrt{2\delta \log n}}\bar\Phi(\sqrt{2\delta\log n}-\mu)d\bar{G}_n(\mu) \nonumber\\
&-\int_{\sqrt{2\delta \log n}}^{\infty}\bar\Phi(\sqrt{2\delta\log n}-\mu)d\bar{G}_n(\mu) \label{eq:fubinistep1}
\end{align}
For the first and third terms of Equation \ref{eq:fubinistep1}, we have
\begin{equation}
\label{eq:fubiniterm1}
    -\int_{-\infty}^{0}\bar\Phi(\sqrt{2\delta\log n}-\mu)d\bar{G}_n(\mu) \leq n^{-\delta}\bar{G}_n(0) 
\end{equation}
and
\begin{equation}
\label{eq:fubiniterm3}
    -\int_{\sqrt{2\delta \log n}}^{\infty}\bar\Phi(\sqrt{2\delta\log n}-\mu)d\bar{G}_n(\mu) \leq \bar G_n(\sqrt{2\delta\log n}).
\end{equation}
For the second term, we have
\begin{align}
    &-\int_{0}^{\sqrt{2\delta \log n}}\bar\Phi(\sqrt{2\delta\log n}-\mu)d\bar{G}_n(\mu) \nonumber \\
\leq\;\ &-\int_{0}^{\sqrt{2\delta\log n}}e^{-\frac{1}{2}(\sqrt{2\delta\log n}-\mu)^2}d\bar{G}_n \label{eq:fubinitail}\\
=\;\ &-\bar{G}_n(\mu)e^{-\frac{1}{2}(\sqrt{2\delta\log n}-\mu)^2}\bigg\rvert^{\mu=\sqrt{2\delta\log n}}_{\mu=0}+\int_{0}^{\sqrt{2\delta\log n}}(\sqrt{2\delta\log n}-y)\bar G_n({y})e^{-\frac{1}{2}(\sqrt{2\delta\log n}-y)^2}dy \label{eq:fubiniintegralbyparts}\\
=\;\ &n^{-\delta}\bar{G}_n(0) - \bar G_n(\sqrt{2\delta\log n}) + \int_{0}^{1}(2\delta\log n) (1-t)\bar G_n(t\sqrt{2\delta\log n})e^{-\frac{1}{2}(\sqrt{2\delta\log n}(1-t))^2}dt \label{eq:fubinichangeofvar}\\
\leq\;\ & n^{-\delta}\bar{G}_n(0) - \bar G_n(\sqrt{2\delta\log n}) + (2\log n)\exp\left\{\sup_{0\leq t \leq 1}\log\bar{G}_n\left(t\sqrt{2\delta\log n}\right)-(1-t)^2\delta\log n\right\} \label{eq:fubinisupintegrand}.
\end{align}
where Equation \ref{eq:fubinitail} is obtained by Gaussian tail bounds, Equation \ref{eq:fubiniintegralbyparts} by integration by parts, Equation \ref{eq:fubinichangeofvar} by changing of variables, and Equation \ref{eq:fubinisupintegrand} by taking the supremum of the integrand over $t\in[0, 1].$ Combining Equations \ref{eq:fubinistep1}, \ref{eq:fubiniterm1}, \ref{eq:fubiniterm3}, and \ref{eq:fubinisupintegrand}, we have
\[
\mathbb{P}_{\mu\sim G_n}\left(X\geq \sqrt{2\delta\log n}\right) \leq (2\log n+2)\exp\left\{\sup_{0\leq t \leq 1}\log\bar{G}_n\left(t\sqrt{2\delta\log n}\right)-(1-t)^2\delta\log n\right\}.
\]
Therefore,
\begin{equation}
    \begin{aligned}
    \label{eq:uppertailbound}
    & \frac{1}{6\sqrt{2\log n}}\,\exp\left\{\sup_{0\leq t \leq 1}\log\bar{G}_n\left(t\sqrt{2\delta\log n}\right)-(1-t)^2\delta\log n\right\} \\
   &\;\leq\; \mathbb{P}_{\mu\sim G_n}\left(X\geq \sqrt{2\delta\log n}\right) \leq (2\log n+2)\exp\left\{\sup_{0\leq t \leq 1}\log\bar{G}_n\left(t\sqrt{2\delta\log n}\right)-(1-t)^2\delta\log n\right\}.
    \end{aligned}
\end{equation}
Similarly, we have
\begin{equation}
    \begin{aligned}
    & \frac{1}{6\sqrt{2\log n}}\,\exp\left\{\sup_{0\leq t \leq 1}\log {G}_n\left(-t\sqrt{2\delta\log n}\right)-(1-t)^2\delta\log n\right\} \\
   &\;\leq\; \mathbb{P}_{\mu\sim G_n}\left(X\leq -\sqrt{2\delta\log n}\right) \leq (2\log n+2)\exp\left\{\sup_{0\leq t \leq 1}\log{G}_n\left(-t\sqrt{2\delta\log n}\right)-(1-t)^2\delta\log n\right\}.
    \end{aligned}
\end{equation}
Combining the two equations above, we have \WFcomment{Changed 6 to 3 below}
\begin{equation}
    \begin{aligned}
    & \frac{1}{3\sqrt{2\log n}}\,\exp\left\{\sup_{0\leq t \leq 1}- Q_n\left(t\sqrt{2\delta\log n}\right)-(1-t)^2\delta\log n\right\} \\
   &\;\leq\; \mathbb{P}_{\mu\sim G_n}\left(|X|\geq \sqrt{2\delta\log n}\right) \leq (4\log n+4)\exp\left\{\sup_{0\leq t \leq 1}-{Q}_n\left(t\sqrt{2\delta\log n}\right)-(1-t)^2\delta\log n\right\}.
    \end{aligned}
\end{equation}
Taking $\log_n$ on both sides, we have \WFcomment{Fixed the equation below too, I think}
\[
-\frac{\log(3\sqrt{2\log n})}{\log n} \leq \tau_n(\delta) - \left[\sup_{0\leq t\leq 1}-\frac{Q_n(t\sqrt{2\delta\log n})}{\log n}-\delta(1-t)^2\right] \leq \frac{\log(4\log n + 4)}{\log n}.
\]
We conclude that
\[
\tau_n(\delta)=-\sup_{0\leq t\leq 1}\frac{Q_n(t\sqrt{2\delta\log n})}{\log n}-\delta(1-t)^2 + O\left(\frac{\log \log n}{\log n}\right),
\]
where the $O\left(\frac{\log \log n}{\log n}\right)$ term is uniform over all $\delta\in [0, 1]$
\end{proof}

We are now ready to restate and prove Lemma \ref{lemma:tailbehavior}:

\lemtailbehavior*
\begin{proof}
Define 
$$g_n(\delta, t)=-\frac{Q_n(t\sqrt{2\delta\log n})}{\log n}+\delta\left[\frac{1}{2}-(1-t)^2\right], \quad \text{and } \quad h_n(\delta)=\sup_{0\leq t\leq 1} g_n(\delta, t).$$
Applying Lemma \ref{lemma:tail}, we have
\begin{align*}
    h_n(\delta)
    &=\tau_n(\delta) + \frac{\delta}{2} + O\left(\frac{\log \log n}{\log n}\right)\\
    &=\lambda_n(\delta) - \frac{1-\delta}{2} + \beta - \frac{1}{2} + O\left(\frac{\log \log n}{\log n}\right).
\end{align*}

To prove part (a), it suffices to show that
\[
h_n(\delta)\leq\max\{h_n(1), 1/4\}, \quad \text{ for all } \delta\in(0,1).
\]




We prove this claim by supposing that $h_n(\delta) > \max\{h_n(1), 1/4\}$ for some $\delta < 1$, and deriving a contradiction.

Let $\delta^*_n$ and $t^*_n$ be values that jointly maximize $g_n(\delta, t)$ over $0\;\leq\; \delta,t\;\leq\; 1$. By assumption,  
\begin{equation}
\label{eq:partabound1}
\frac{1}{4} < g_n(\delta^*_n,t^*_n) \leq \delta_n^*\left[\frac{1}{2} - (1-t_n^*)^2\right],
\end{equation}
so we must have $\delta^*_n>1/2$ and $t^*_n>1/2$, and also
\[
\frac{1}{4\delta_n^*} < \frac{1}{2} - (1-t_n^*)^2.
\]
Further, because $g_n(\delta^*_n,t^*_n) > h_n(1)$, we also have
\begin{align*}
0 &\;<\; g_n(\delta_n^*, t_n^*) - g_n(1, t_n^*\sqrt{\delta_n^*})\\
  &\;<\; \delta_n^*\left(\frac{1}{2} - (1-t_n^*)^2\right) - \left(\frac{1}{2} - \left(1-t_n^*\sqrt{\delta_n^*}\right)^2\right),
\end{align*}
which leads to 
\begin{align*}
&\delta_n^*\left(\frac{1}{2} - (1-t_n^*)^2\right) - \left(\frac{1}{2} - \left(1-t_n^*\sqrt{\delta_n^*}\right)^2\right) > 0\\
\iff &\frac{1}{2}\delta_n^*-\delta_n^*+2t_n^*\delta_n^*-\delta_n^*(t_n^*)^2-\frac{1}{2}+1+\delta_n^*(t_n^*)^2-2t_n^*\sqrt{\delta_n^*} > 0\\
\iff &\frac{1}{2} - \frac{1}{2}\delta_n^* + 2t_n^*\delta_n^* - 2t_n^*\sqrt{\delta_n^*} > 0\\
\iff & 2t_n^*\sqrt{\delta_n^*}(\sqrt{\delta_n^*}-1) > \frac{1}{2}(\sqrt{\delta_n^*}-1)(\sqrt{\delta_n^*}+1) \\
\iff &2t_n^*\sqrt{\delta_n^*} < \frac{1}{2}(\sqrt{\delta_n^*}+1) \\
\iff &\frac{1}{4\delta_n^*} > \left(2t_n^* - \frac{1}{2}\right)^2.
\end{align*}
Combining the two equations above, we have
$$\frac{1}{2}-(1-t^*_n)^2>\left(2t^*_n-\frac{1}{2}\right)^2,$$
a contradiction for $t^*_n>1/2$.

Turning to part (b), suppose that $Q_n(\theta) = Q(\theta/\sigma_n)$ for some sequence $\sigma_n$, where $Q(\theta)$ is a regularly varying function with $g_Q(a)\leq a^2$. 
We consider the following two scenarios.
\begin{enumerate}[(i)]
\item $\limsup \sqrt{2\log n}\sigma_n^{-1} < \infty$.

Since the distribution $G$ has unbounded support, and $\limsup \sqrt{2\log n}\sigma_n^{-1} < \infty$, $Q(\sqrt{2\log n}\sigma_n^{-1})$ is bounded. Therefore
\[
\lim_n \frac{Q(\sqrt{2\log n}\sigma_n^{-1})}{\log n} = 0,
\]
and
\[
\lambda^*(\delta)-\frac{1-\delta}{2} = \lim_n h_n(\delta)+\frac{1}{2}-\beta = \sup_{0\leq t\leq 1} \delta\left[\frac{1}{2}-(1-t)^2\right] +\frac{1}{2}-\beta = \frac{\delta}{2} +\frac{1}{2}-\beta.
\]
Hence the supremum of $\lambda^*(\delta)-\frac{1-\delta}{2}$ on $\delta\in[0, 1]$ is attained at $\delta=1$.
\item $\limsup \sqrt{2\log n}\sigma_n^{-1} = \infty$. 

Note that the limit $\lambda^*(\delta)$ exists for any $\delta$. Therefore, by considering the sub-sequence of $\sigma_n$ with $\sqrt{2\log n}\sigma_n^{-1} \to \infty$, we can assume without loss of generality that $\sqrt{2\log n}\sigma_n^{-1} \to \infty$.
To prove the desired inequality, it suffices to show that, for any $\epsilon > 0$, there exists $\bar{n}(\epsilon) \in \mathbb{N}$ such that
\[
h_n(\delta)\leq\max\{h_n(1), 0\} + \epsilon, \quad \text{ for all } \delta \in (0,1), \; n > \bar{n}(\epsilon).
\]
Fix $\epsilon > 0$. Like part (a), we will prove this by supposing that $h_n(\delta) > \max\{h_n(1), 0\} + \epsilon$ for all $n$ and some $\delta$, and deriving a contradiction. Suppose that for any $N>0$, there exists $n> N$ and $(\delta^*_n, t^*_n)\in [0,1)\times [0,1]$ such that
$$g_n(\delta^*_n, t^*_n) > \max\{\epsilon, h_n(1)+\epsilon\}.$$

To make use of the regularly varying property, we need to first obtain upper and lower bound for $t^*_n\sqrt{\delta^*_n}$. Since 
\begin{equation}
\label{eq:regbound1}
g_n(\delta^*_n, t^*_n) = \delta^*_n\left[\frac{1}{2}-(1-t^*_n)^2\right] - \frac{Q(t^*_n\sqrt{2\delta^*_n\log n}\sigma_n^{-1})}{\log n} >\epsilon
\end{equation}
and $Q$ is non-negative, we have $\delta^*_n>2\epsilon$ and $t^*_n>1-\sqrt{2}/{2}>1/4$. Therefore
$$\frac{1}{2t^*_n\sqrt{\delta^*_n}}<\sqrt{\frac{2}{\epsilon}}.$$
On the other hand, we have $g_n(\delta^*_n, t^*_n) > h_n(1) + \epsilon \geq g_n(1, 1/2) + \epsilon$, that is, 
\begin{equation}
\label{eq:regbound2}
\delta^*_n\left[\frac{1}{2}-(1-t^*_n)^2\right] - \frac{Q(t^*_n\sqrt{2\delta^*_n\log n}\sigma_n^{-1})}{\log n} > \frac{1}{4}+\epsilon - \frac{Q(\frac{1}{2}\sqrt{2\log n}\sigma_n^{-1})}{\log n}.
\end{equation}
Following the first claim (see Equation \ref{eq:partabound1}), we have $$\delta^*_n\left[\frac{1}{2}-(1-t^*_n)^2\right] \leq \frac{1}{4} < \frac{1}{4}+\epsilon.$$
Comparing the two equations above and noting that $Q$ is non-decreasing, we have
\[
\frac{Q(t^*_n\sqrt{2\delta^*_n\log n}\sigma_n^{-1})}{\log n} \leq \frac{Q(\frac{1}{2}\sqrt{2\log n}\sigma_n^{-1})}{\log n}.
\]
Therefore
$$t^*_n\sqrt{\delta^*_n} \leq \frac{1}{2}.$$

Using properties of regularly varying functions \citep{bingham1989regular}, we know that 
\[
\lim_{s\to\infty}\sup_{a\in\Gamma}\left|\frac{Q(as)}{Q(s)} -a^2\right|\to 0
\]
for any compact set $\Gamma$. Therefore, for any $c_0>0$ there exists $S>0$ such that 
$$\frac{Q(as)}{Q(s)} \leq a^2 + c_0 $$
for any $s>S$ and $a\in \left[1, \sqrt{\frac{2}{\epsilon}}\right].$
Take $s = t^*_n\sqrt{2\delta^*_n\log n}\sigma_n^{-1}$. Since $\sqrt{2\log n}\sigma_n^{-1} \to \infty$, we know that for large enough $n$,

\begin{equation}
\label{eq:regbound3}
    \frac{Q(\frac{1}{2}\sqrt{2\log n}\sigma_n^{-1})}{Q(t^*_n\sqrt{2\delta^*_n\log n}\sigma_n^{-1})} \leq \frac{1}{4t^{*2}_n\delta^*_n}+c_0.
\end{equation}
Combining Equations \ref{eq:regbound1}, \ref{eq:regbound2} and \ref{eq:regbound3}, we have
$$\left(\frac{1}{4t^{*2}_n\delta^*_n}+c_0\right)\delta^*_n\left[\frac{1}{2}-(1-t^*_n)^2\right] > \frac{1}{4}+\epsilon.$$
Since $c_0$ is arbitrary, we can take $c_0<\epsilon$. It follows that 
\[ \epsilon > c_0t^{*2}_n\delta^*_n>c_0\delta^*_n\left[\frac{1}{2}-(1-t^*_n)^2\right],
\]
and the above equation yields 
$$\frac{1}{t^{*2}_n}\left[\frac{1}{2}-(1-t^*_n)^2\right] > 1,$$
a contradiction, and the second claim is proved.
\end{enumerate}
\end{proof}

\subsection{Proof of Corollary 1}
\begin{proof}
Recall the definition of $\lambda_n$. For the first part, it suffices to notice that
\begin{align*}
\lim_n \lambda_n(1)&=\lim_n \sup_{0\leq t\leq 1}-\frac{\nu\log(t\sqrt{2\log n}\sigma_n^{-1})}{\log n}-(1-t)^2 + 1 -\beta\\
&=\lim_n \sup_{0\leq t\leq 1}-\frac{\nu\log(\sigma_n^{-1})}{\log n}-(1-t)^2+ 1 -\beta\\
&=\nu \rho + 1 -\beta.
\end{align*}
Therefore $\beta^*(\rho)=\nu \rho+1.$

For the second part, note that
$$\lim_n \lambda_n(1) = \sup_{0\leq t\leq 1}-\frac{2at}{r}-(1-t)^2=\sup_{0\leq t\leq 1}-[t-(1-\frac{a}{r})]^2+(1-\frac{a}{r})^2-\beta.$$
Since $0<1-\frac{a}{r}<1$, it follows that 
$$\lim_n \lambda_n(1)=(1-\frac{a}{r})^2-\beta>0 \quad\iff\quad r>\frac{a}{(1-\sqrt{\beta})}.$$
Therefore $$\beta^*(r) = (1-\frac{a}{r})^2.$$

For the third part, by Lemma \ref{lemma:tail}, 
$$\lim_n \lambda_n(1)
=\lim_{n\to\infty}\sup_{0\leq t\leq 1} \frac{\log[1-\Phi(t\sqrt{2\log n}r^{-1}-\mu)]}{2\log n}-(1-t)^2 + 1 -\beta.$$
Using properties of Gaussian tail probability, it can be easily verified that 
$$\lim_{n\to\infty} \frac{\log\left(1-\Phi(t\sqrt{2\log n}r^{-1}-\mu)\right)}{2\log n}=-\frac{t^2}{\sigma^2r^2}\quad\text{uniformly on $t\in[0,1]$}.$$
Therefore
$$\lim_{n\to\infty}\lambda_n(1)>0\quad\iff\quad\sup_{0\leq t\leq 1} -\left[\frac{t^2}{r^2}+(1-t)^2\right]>1-\beta,$$
and $\beta^*(r) = \frac{r^2}{r^2 + 1}.$
\end{proof}

\subsection{Proof of Theorem 2}

Next, we restate and prove Theorem~\ref{thm:polytail}: 

\polytail*

\begin{proof}
We first improve on Lemma \ref{lemma:tail} and derive a tighter bound on the tail probabilities of the alternative distribution. For any $0<\delta\leq 1$, we have
\begin{equation}
\begin{aligned}
\label{eq:term1and2}
\mathbb{P}_{\mu_n \sim G_n}(X\geq \sqrt{2\delta\log n})& =\int_{-\infty}^{\infty}\left(1-G\left(\frac{\sqrt{2\delta\log n}-z}{\sigma_n}\right)\right)\phi(z)dz.\\
&=\int_{-\infty}^{\sqrt{2\delta\log n}-1}\left(1-G\left(\frac{\sqrt{2\delta\log n}-z}{\sigma_n}\right)\right)\phi(z)dz \\
& \quad+\int_{\sqrt{2\delta\log n}-1}^{\infty}\left(1-G\left(\frac{\sqrt{2\delta\log n}-z}{\sigma_n}\right)\right)\phi(z)dz \\
\end{aligned}
\end{equation}

Because $\sigma_n \to 0$, the tail approximation for $1-G(\mu)$ holds uniformly for $\mu > 1/\sigma_n$. Thus, we can approximate the first term in \eqref{eq:term1and2} as
$$\int_{-\infty}^{\sqrt{2\delta\log n}-1}\left(1-G\left(\frac{\sqrt{2\delta\log n}-z}{\sigma_n}\right)\right)\phi(z)dz\bigg/ \int_{-\infty}^{\sqrt{2\delta\log n}-1} C\left(\frac{\sigma_n}{\sqrt{2\delta\log n}-z}\right)^{\nu}\phi(z)dz \to 1,$$
as $n\to\infty$. It is also straightforward to show that, as $n \to \infty$,
\begin{align*}
\int_{-\infty}^{-(2\delta\log n)^{1/4}} \left(\frac{\sqrt{2\delta\log n}}{\sqrt{2\delta\log n}-z}\right)^{\nu}\phi(z)dz &\;\leq\; \left(\frac{\sqrt{2\delta\log n}}{\sqrt{2\delta\log n}+(2\delta\log n)^{1/4}}\right)^{\nu}\Phi(-(2\delta\log n)^{1/4})\;\to\; 0,\\[15pt]
\int_{-(2\delta\log n)^{1/4}}^{(2\delta\log n)^{1/4}} \left(\frac{\sqrt{2\delta\log n}}{\sqrt{2\delta\log n}-z}\right)^{\nu}\phi(z)dz &\;\to\; 1, \qquad \text{and}\\[15pt]
\int_{(2\delta\log n)^{1/4}}^{\sqrt{2\delta\log n}-1} \left(\frac{\sqrt{2\delta\log n}}{\sqrt{2\delta\log n}-z}\right)^{\nu}\phi(z)dz &\;\leq\; \left(\sqrt{2\delta\log n}\right)^{\nu}\left(1-\Phi((2\delta\log n)^{1/4})\right)\;\to\; 0.
\end{align*}
As a result, we have
\begin{equation}
\label{eq:term1}
    \int_{-\infty}^{\sqrt{2\delta\log n}-1} C\left(\frac{\sigma_n}{\sqrt{2\delta\log n}-z}\right)^{\nu}\phi(z)dz \bigg/ \left(C\frac{(r/\sqrt{\delta})^{\nu}}{n^{1-\beta}}\right) \to 1.
\end{equation}

Let $\epsilon_0=\min\{\beta/2-1/4, 1/2-\beta/2\}$. Turning to the second term in \eqref{eq:term1and2}, we have
\begin{equation}
\label{eq:term2}
    0\leq \int_{\sqrt{2\delta\log n}-1}^{\infty} \left(1-G\left(\frac{\sqrt{2\delta\log n}-z}{\sigma_n}\right)\right)\phi(z)dz \leq 1 - \Phi\left(\sqrt{2\delta\log n}-1\right)
    \leq \frac{1}{n^{\delta-\epsilon_0}}.
\end{equation}
Combining \eqref{eq:term1and2}--\eqref{eq:term2} and recalling the definition of $\sigma_n$, we have
\begin{equation}
\begin{aligned}
\label{eq:polytailtail}
(1+o(1))C\frac{(r/\sqrt{\delta})^{\nu}}{n^{1-\beta}}&\;\leq\;
\mathbb{P}_{\mu_n \sim G_n}(X\geq \sqrt{2\delta\log n})\\
&\;\leq\; (1+o(1))C\frac{(r/\sqrt{\delta})^{\nu}}{n^{1-\beta}} +
\frac{1}{n^{\delta-\epsilon_0}}
\end{aligned}
\end{equation}
For $\delta > 1-\beta+\epsilon_0$ we have
\[
\left(\frac{1}{n^{\delta-\epsilon_0}}\right)\bigg/ \left(C\frac{(r/\sqrt{\delta})^{\nu}}{n^{1-\beta}}\right) = O\left(n^{1-\beta-\delta+\epsilon_0}\right)\to 0.
\]
Therefore for $\delta > 1-\beta+\epsilon_0$,
\[
\mathbb{P}_{\mu_n \sim G_n}(X_n\geq \sqrt{2\delta\log n})\bigg/ \left(C\frac{(r/\sqrt{\delta})^{\nu}}{n^{1-\beta}}\right) \to 1.
\]
Similarly, 
\[
\mathbb{P}_{\mu_n \sim G_n}(X_n\leq - \sqrt{2\delta\log n})\bigg/ \left(C\frac{(r/\sqrt{\delta})^{\nu}}{n^{1-\beta}}\right) \to 1.
\]
Therefore for $\delta > 1-\beta+\epsilon_0$,
\begin{equation}
\label{eq:poly_tail_bound}
\mathbb{P}_{\mu_n \sim G_n}(|X_n|\geq \sqrt{2\delta\log n})\bigg/ \left(2C\frac{(r/\sqrt{\delta})^{\nu}}{n^{1-\beta}}\right) \to 1.
\end{equation}
Suppose that the $1-\alpha$ quantile of $\max_i|X_i|$ under the null is $m(n, \alpha)$. Then the level-$\alpha$ max test rejects the null when $\max_i|X_i|>m(n, \alpha)$. Since $m(n, \alpha)/\sqrt{2\log n} \to 1$, we have
\[
\mathbb{P}_{\mu_n \sim G_n}(|X_n|\geq m(n, \alpha))\bigg/ \left(2C\frac{(r/\sqrt{\delta})^{\nu}}{n^{1-\beta}}\right) \to 1
\]
and
\[
n^{1-\beta}\mathbb{P}_{\mu_n \sim G_n}(|X_n|\geq m(n, \alpha)) \to 2Cr^{\nu}.
\]
Hence the level-$\alpha$ max test satisfies
\begin{equation}
\begin{aligned}
\mathbb{P}_{H_1}(\text{reject }H_0)& =
1-\left(1-(1-n^{-\beta})\mathbb{P}(|N(0, 1)|\geq m(n, \alpha))-n^{-\beta}\mathbb{P}_{\mu_n \sim G_n}(|X_n|\geq m(n, \alpha))\right)^n\\
& =
1-\left(1-(1-n^{-\beta})(1- (1-\alpha)^{1/n})-n^{-\beta}\mathbb{P}_{\mu_n \sim G_n}(|X_n|\geq m(n, \alpha))\right)^n\\
&\to
1-e^{-2Cr^{\nu} + \log(1-\alpha)}, \quad \text{  as $n\to\infty$,}
\end{aligned}
\end{equation}
and the first part of the proposition is proved.

Next we show that modified higher criticism is asymptotically powerless. For modified higher criticism, the critical value of the test $b(n,\alpha)\sim \sqrt{2\log\log n}$. Let $p_i=\PP(|N(0, 1)|\geq |X_i|), i=1,\dots,n$ be the $p$-values. Suppose that under $H_1$, the $p$-values are $i.i.d$ with distribution function $F_n$. Let
\[
\widehat{F_n}(t)=\frac{1}{n}\sum_{i=1}^{n}1_{(p_i\leq t)},
\]
be the empirical distribution of $\{p_i\}, i=1,\dots,n$. 
Let $\tilde{p}_{i} = F_n(p_i)$, and
\[
\widetilde{F_n}(t)=\widehat{F_n}(F_n^{-1}(t))=\frac{1}{n}\sum_{i=1}^{n}1_{(F_n(p_i)\leq t)} = \frac{1}{n}\sum_{i=1}^{n}1_{(\tilde{p}_{i}\leq t)},
\]
Then $\tilde{p}_{i}\simiid \text{Unif}[0, 1]$, and $\{\widetilde{F_n}(t), 0\leq t\leq 1)\}$ follows the same distribution as the empirical distribution of $\{p_i\}, i=1,\dots,n$ under the null. Note that the higher criticism statistics can be decomposed as
\begin{align*}
&\sup_{1/n<t<1/2}\frac{\sqrt{n}(\widehat{F_n}(t)-t)}{\sqrt{t(1-t)}}\\
=&\sup_{F_n(1/n)<t<F_n(1/2)}\frac{\sqrt{n}(\widehat{F_n}(F_n^{-1}(t))-F_n^{-1}(t))}{\sqrt{F_n^{-1}(t)(1-F_n^{-1}(t))}}\\
=&\sup_{F_n(1/n)<t<F_n(1/2)}\frac{\sqrt{n}(\widetilde{F_n}(t)-F^{-1}(t))}{\sqrt{F_n^{-1}(t)(1-F_n^{-1}(t))}}\\
=&\sup_{F_n(1/n)<t<F_n(1/2)}\left(\sqrt{\frac{t(1-t)}{F_n^{-1}(t)(1-F_n^{-1}(t))}}\frac{\sqrt{n}(\widetilde{F_n}(t)-t)}{\sqrt{t(1-t)}}+\frac{\sqrt{n}(t-F_n^{-1}(t))}{\sqrt{F^{-1}(t)(1-F_n^{-1}(t))}}\right).
\end{align*}
We denote
$$A_n(t):=\sqrt{\frac{t(1-t)}{F_n^{-1}(t)(1-F_n^{-1}(t))}},$$ 
$$B_n(t):=\frac{\sqrt{n}(t-F_n^{-1}(t))}{\sqrt{F_n^{-1}(t)(1-F_n^{-1}(t))}}$$
and $$W_n(t):=\frac{\sqrt{n}(\widetilde{F_n}(t)-t)}{\sqrt{t(1-t)}}.$$ 
Note that by Taylor expansion,
\[
A_n(F_n(t))-1\leq \frac{1}{2}\frac{F_n(t)-t}{t}, \quad \text{ for any }t>0.
\]
Let $D(t) = F_n(t)-t = n^{-\beta}\left(\mathbb{P}_{\mu_n \sim G_n}(|X_n|\geq \Phi^{-1}(1-t/2))-t\right)$ and $q_n=(\log n)^3/2n$. Let $\delta_0 = 1-\epsilon$ for $\epsilon>0$ small enough. Then for large enough $n$, by Equation~\ref{eq:poly_tail_bound}
\[
n\sup_{1/n\leq t\leq q_n}D(t)\leq  n^{1-\beta}\left(\mathbb{P}_{\mu_n \sim G_n}\left(|X|\geq \sqrt{2\delta_0\log n}\right)\right)\to 2Cr^{\nu}\delta_0^{-\nu/2}\leq 4Cr^{\nu}.
\]
For large enough $n$, we have
\[
\sup_{F_n(1/n)<t<F_n(q_n)}A_n(t) \leq 1+\frac{n}{2}\sup_{1/n\leq t\leq q_n}D(t)\leq 2Cr^{\nu}
\]
and
$$\sup_{F_n(1/n)<t<F_n(q_n)}B_n(t) \leq n\sup_{1/n\leq t\leq q_n}D(t)\leq 4Cr^{\nu}.$$
Note that $1/n\leq F_n(1/n)$ and $F_n(q_n)\leq q_n+D(q_n)\leq q_n + 4Cr^\nu/n\leq q_n + (\log n)^3/2n = (\log n)^3/n$. Lemma 3 and 4 in \citet{jaeschke1979asymptotic} implies that
$$\sup_{F_n(1/n)<t<F_n(q_n)}W_n(t)/\sqrt{2\log\log n}\leq \sup_{1/n<t<(\log n)^3/n}W_n(t)/\sqrt{2\log\log n}\stackrel{p}{\to} 0.$$
Therefore
$$\mathbb{P}\left(\sup_{F_n(1/n)<t<F_n(q_n)}A_n(t)W_n(t)+B_n(t) > b(n,\alpha)\right) = 0.$$

Write $t=2(1-\Phi(\sqrt{2\delta\log n}))$ for $0<\delta<1$. Then $t\sim n^{-\delta}$ up to $\log n$ factors. Recall that
$\epsilon_0=\min\{\beta/2-1/4, 1/2-\beta/2\}$.
It can be easily verified from Equation \ref{eq:polytailtail} that
$$F_n(t)-t\leq \left\{
\begin{array}{cc}
(1+o(1))C(\frac{r}{\sqrt{\delta}})^{\nu}n^{-1} & \mbox{for $1-\beta+\epsilon_0\leq \delta<1$,}  \\
n^{-(\beta+\delta-\epsilon_0)}& \mbox{for $\epsilon_0<\delta<1-\beta+\epsilon_0$,} \\
n^{-\beta}& \mbox{for $\delta<\epsilon_0$,}
\end{array}\right.$$
Let $q_n^* = 2(1-\Phi(\sqrt{2(1-\beta+\epsilon_0)\log n}))$.
Then $t\geq n^{-\delta-\epsilon_0}$, and it follows that for some constant $C_0$, 
\begin{align*}
&\sup_{F(q_n)<t<F(1/2)}\left(A_n(t)-1\right)\\
\leq &\max\left\{\sup_{q_n<t<q_n^*}A_n(F_n(t))-1, \sup_{q_n^*<t<1/2}A_n(F_n(t))-1\right\}\\
\leq &\max\left\{\frac{C_0}{(\log n)^{3}}, n^{-\beta+2\epsilon_0}\right\} = O\left((\log n)^{-3}\right).
\end{align*}
Similarly we have
\begin{align*}
&\sup_{F_n(q_n)<t<F_n(1/2)}B_n(t)\\
\leq &\max\left\{\sup_{q_n<t<q_n^*}B_n(F_n(t)), \sup_{q_n^*<t<1/2}B_n(F_n(t))\right\}\\
\leq &\max\left\{\frac{C_0}{(\log n)^{3/2}}, n^{-\frac{1}{2}\beta+\frac{1}{4}}\right\} = O\left((\log n)^{-3/2}\right).
\end{align*}
Therefore by Theorem 1 in \citet{jaeschke1979asymptotic}, for large enough $n$ we have
\begin{align*}
&\mathbb{P}\left(\sup_{F_n(q_n)<t<F_n(1/2)}A_n(t)W_n(t)+B_n(t) > b(n,\alpha)\right) \\
\leq &\mathbb{P}\left(\sup_{F_n(q_n)<t<F_n(1/2)}W_n(t) > b(n,\alpha)-\frac{C_0(b(n,\alpha)+1)}{(\log n)^{3/2}}\right)\\
\leq &\mathbb{P}\left(\sup_{0<t<F_n(1/2)}W_n(t) > b(n,\alpha)-\frac{1}{\log n}\right)\to\alpha,
\end{align*}
and the proof is complete.

\end{proof}

\bibliographystyle{apalike}
\bibliography{globaltesting.bib}

\appendix
\appendixpage

\section{Counterexample showing that the condition in Theorem \ref{thm:main} is almost necessary}
Suppose that $\sigma_n=r/\sqrt{2\log n}, r>0$ and $G(\theta)$ is the distribution with $\mathbb{P}(\theta = 3^m)=e^{-3^m}, m=1,2,\dots,$ and $\mathbb{P}(\theta = 0) =1-\sum_{m=1}^{\infty}e^{-3^m}$. Let $\beta=0.52$ and $n_k=e^{5\cdot 3^k},k=1,2,\dots$. Then we have
$$\mathbb{P}\left(\mu_{n_k} = \sqrt{2\cdot (0.2\cdot 3^mr)^2\log n_k}\right)=n_k^{-(0.52+0.2\cdot 3^m)},m=-k,\dots,-1,0,1,\dots.$$
For $m\geq 1$, the probability is less than $n_{k}^{-1.1}$, and the corresponding signal can not be used for detection. For $m\leq 0$, we have $0.52+0.2\cdot 3^m\leq 0.72<0.75$. Therefore for max test to have full asymptotic power, we need
$$\left(0.2\cdot 3^mr\right)^2>\left(1-\sqrt{1-(0.52+0.2\cdot 3^m)}\right)^2\quad\text{for some integer }m\leq 0\Rightarrow r>2.354.$$
For the higher criticism to have full power \citep{donoho2004higher}, we need
$$(0.2\cdot 3^mr)^2>0.52+0.2\cdot 3^m-0.5\quad\text{for some integer }m\leq 0\Rightarrow r>2.345.$$
Since $2.345<2.354$, the detection boundary for higher criticism is smaller than that of max test despite $F$ being exponential.

\section{Proof of Propositions \ref{prop:HCoptimal} and \ref{prop:lambda-power}(b) for the modified higher criticism and Berk-Jones tests}

Since Proposition \ref{prop:HCoptimal} is a directly corollary of Proposition \ref{prop:lambda-power}, we will only provide the proof of Part (b) of Proposition \ref{prop:lambda-power} for the modified higher criticism and Berk-Jones tests.

\begin{proof}

Under the condition of Part (b), there exists $\delta_0\in (0, 1)$ and constant $c_0>0$ such that $\displaystyle \lim_{n\to\infty}\lambda_n(\delta_0) - \frac{1-\delta_0}{2} = 2c_0>0$ for large enough $n$. Let $t=\bar\Phi(2\delta_0\log_n)<n^{-\delta_0}$. Recall that $F_n$ is the empirical distribution of $p$-values. Therefore $nF_n(t)=N(\delta_0)$ follows a binomial distribution
with 
\[
\EE_{H_1}N(\delta_0) = nt(1-n^{-\beta}) + n^{\lambda_n(\delta_0)}\geq nt(1-n^{-\beta}) + n^{\frac{1-\delta_0}{2} + c_0}
\geq nt + \frac{1}{2}n^{\frac{1-\delta_0}{2} + c_0}.
\] 
for large enough $n$, and
\[
\text{Var}_{H_1}N(\delta_0) = \EE_{H_1}N(\delta_0)\left(1-\frac{\EE_{H_1}N(\delta_0)}{n}\right)
\]
Therefore, by Chebyshev's inequality,
\begin{align*}
\PP_{H_1}[N(\delta_0) < nt + n^{\frac{1-\delta+c_0}{2}}] &\leq \frac{\text{Var}_{H_1}N(\delta_0)}{\left(\EE_{H_1}N(\delta_0) - nt - n^{\frac{1-\delta_0+c_0}{2}}\right)^2}\\
& \leq \frac{\EE_{H_1}N(\delta_0)}{\left(\EE_{H_1}N(\delta_0) - nt - n^{\frac{1-\delta_0+c_0}{2}}\right)^2}\\
& \leq \frac{1}{\EE_{H_1}N(\delta_0)-nt-2n^{\frac{1-\delta_0+c_0}{2}}}\\
& \leq n^{-\frac{1-\delta_0+c_0}{2}}
\end{align*}
for large enough $n$.
Therefore, for the modified higher criticism statistics, we have
\begin{align*}
\PP_{H_1}(mHC_n \geq 2\sqrt{\log\log n}) & \geq \PP_{H_1}\left(\frac{\sqrt{n}(F_n(t)-t)}{\sqrt{t(1-t)}}\geq 2\sqrt{\log\log n}\right) \\
& \geq \PP_{H_1}\left(\frac{N(\delta_0)-nt}{\sqrt{nt(1-t)}}\geq 2\sqrt{\log\log n}\right) \\
& \geq 1 - \PP_{H_1}\left[N(\delta_0) < nt + n^{\frac{1-\delta+c_0}{2}}\right] \to 1
\end{align*}
as the $n\to\infty$, where the last inequality holds for large enough $n$.
Now we turn to the Berk-Jones statistics. First, it can be easily verified that $\log(x+1)>x/2$ for $x\in(-1/2, 1/2)$. Without loss of generality, suppose that $2c_0 < (1-\delta_0)/2$, then $\EE_{H_1}N(\delta_0)/nt\to 1$, and $F_n(t)/t \stackrel{p}{\to} 1$. If $1/2<F_n(t)/t<3/2$, then
\begin{align*}
& 2n\left[F_n(t)\log\frac{F_n(t)}{t} + (1-F_n(t))\log\frac{(1-F_n(t))}{(1-t)} \right]\\
\geq \;\ & nF_n(t)\left(\frac{F_n(t)}{t}-1\right) + n(1-F_n(t))\left(\frac{(1-F_n(t))}{(1-t)}-1\right) = \frac{n(F_n(t)-t)^2}{t(1-t)}. 
\end{align*}
Therefore
\begin{align*}
\lim_{n\to\infty}\PP_{H_1}(BJ_n \geq 2\sqrt{\log\log n}) & \geq \lim_{n\to\infty}\PP_{H_1}\left(\frac{n(F_n(t)-t)^2}{t(1-t)}\geq 4\log\log n\right) = 1,
\end{align*}
which completes the proof.
\end{proof}

\section{Additional simulation results}\label{app:additional-simulations}

We provide additional simulation results where $G$ is the Gaussian (Figure~\ref{fig:gaussianpower}), logistic (Figure~\ref{fig:logisticpower}), chi-squared (Figure~\ref{fig:chipower}),
$t_5$ (Figure~\ref{fig:t5power}), and $t_3$ (Figure~\ref{fig:t3power}) distribution, and $G_n=rG$. In each simultion, $n=50,000$ and there are $n_1 = \lfloor n^{1-\beta}\rfloor$ non-null means drawn from $G_n$. We find that in all settings, the power of max test is similar to the power of the higher criticism test when $\beta>1/2$.

\begin{figure}
\centering
\includegraphics[width = .8\textwidth]{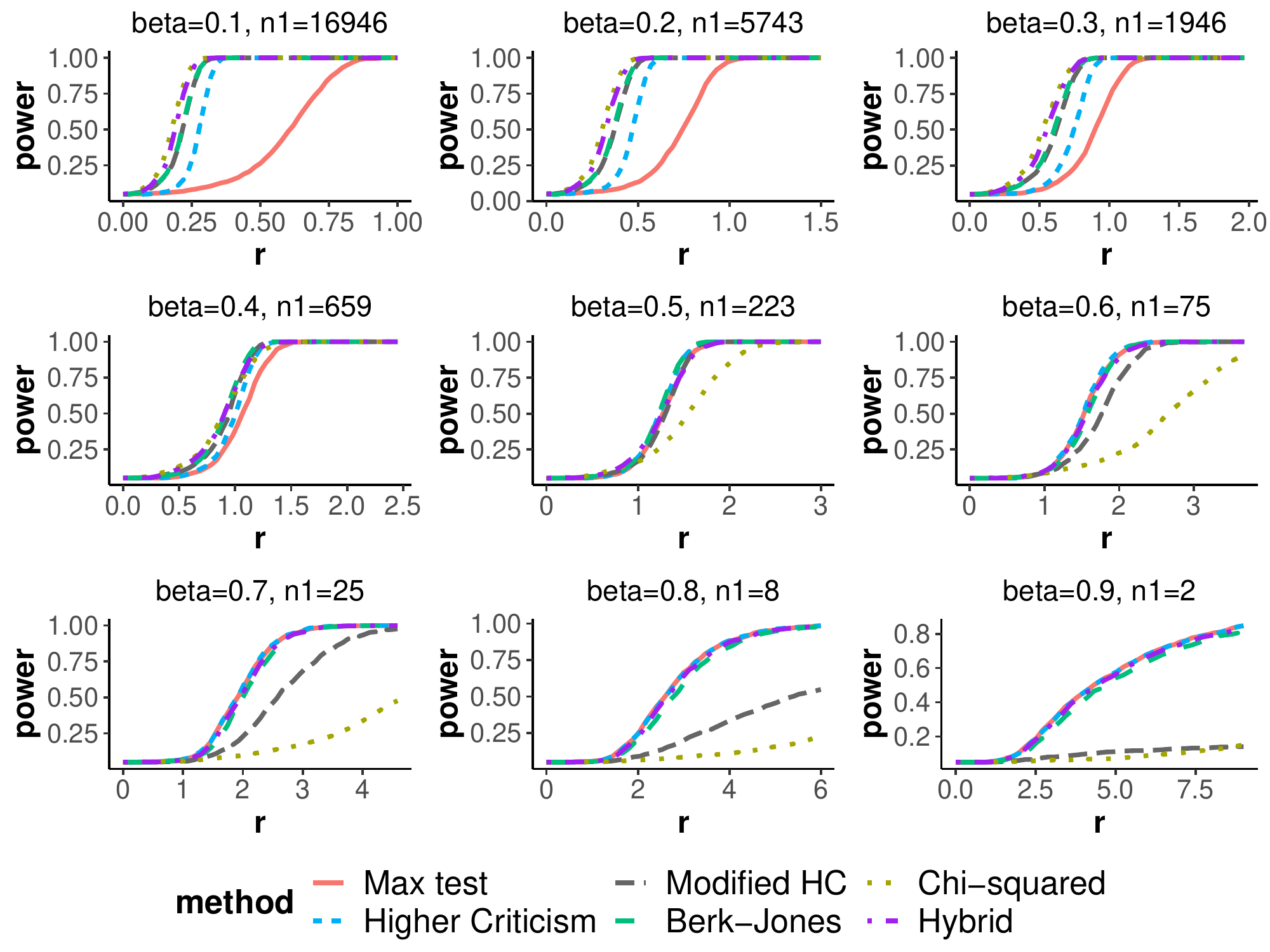}
\caption{$G = N(0, 1)$}
\label{fig:gaussianpower}
\end{figure}

\begin{figure}
\centering
\includegraphics[width = .8\textwidth]{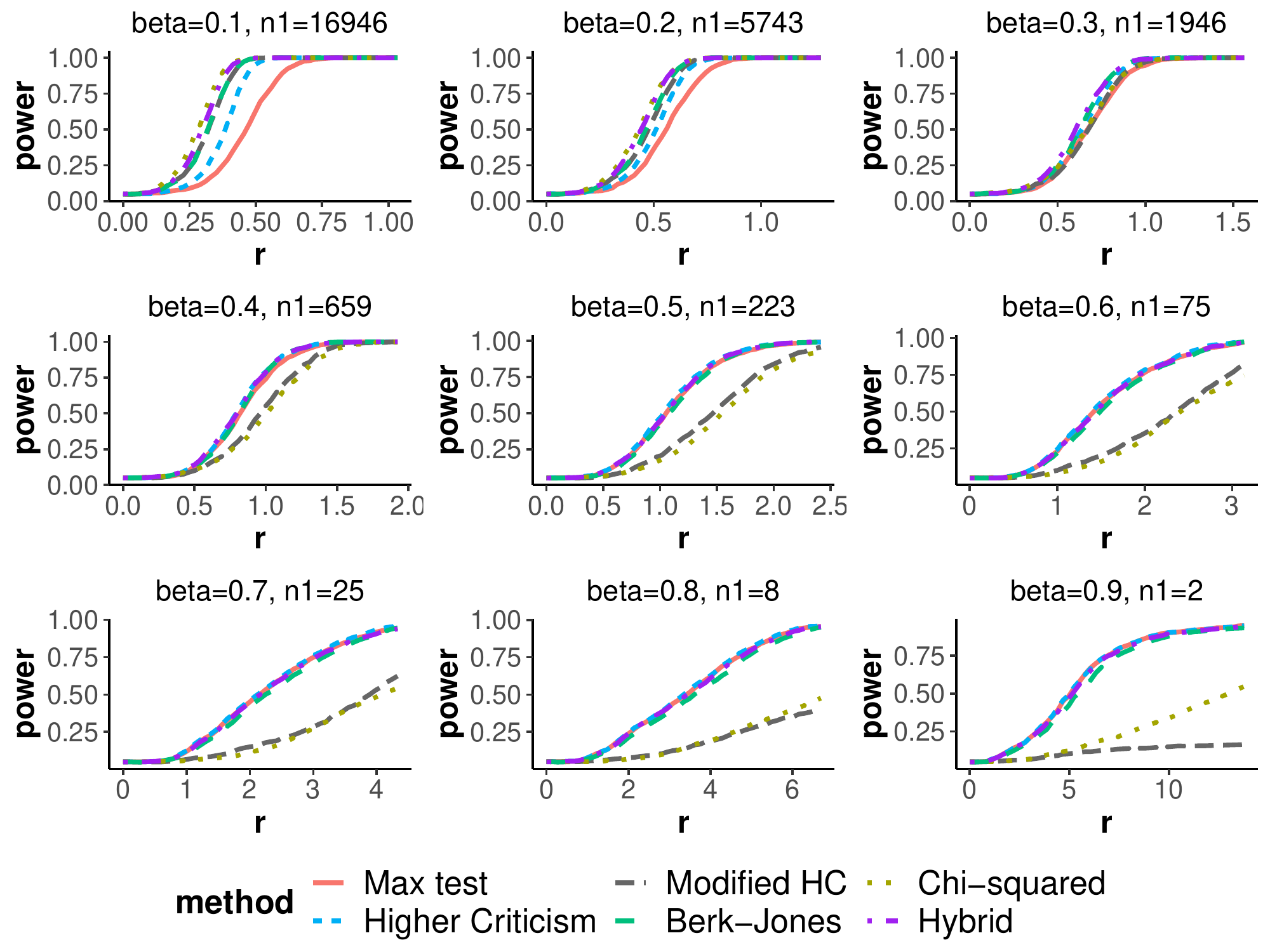}
\caption{$G = \text{Logistic}(0, 1)$}
\label{fig:logisticpower}
\end{figure}

\begin{figure}
\centering
\includegraphics[width = .8\textwidth]{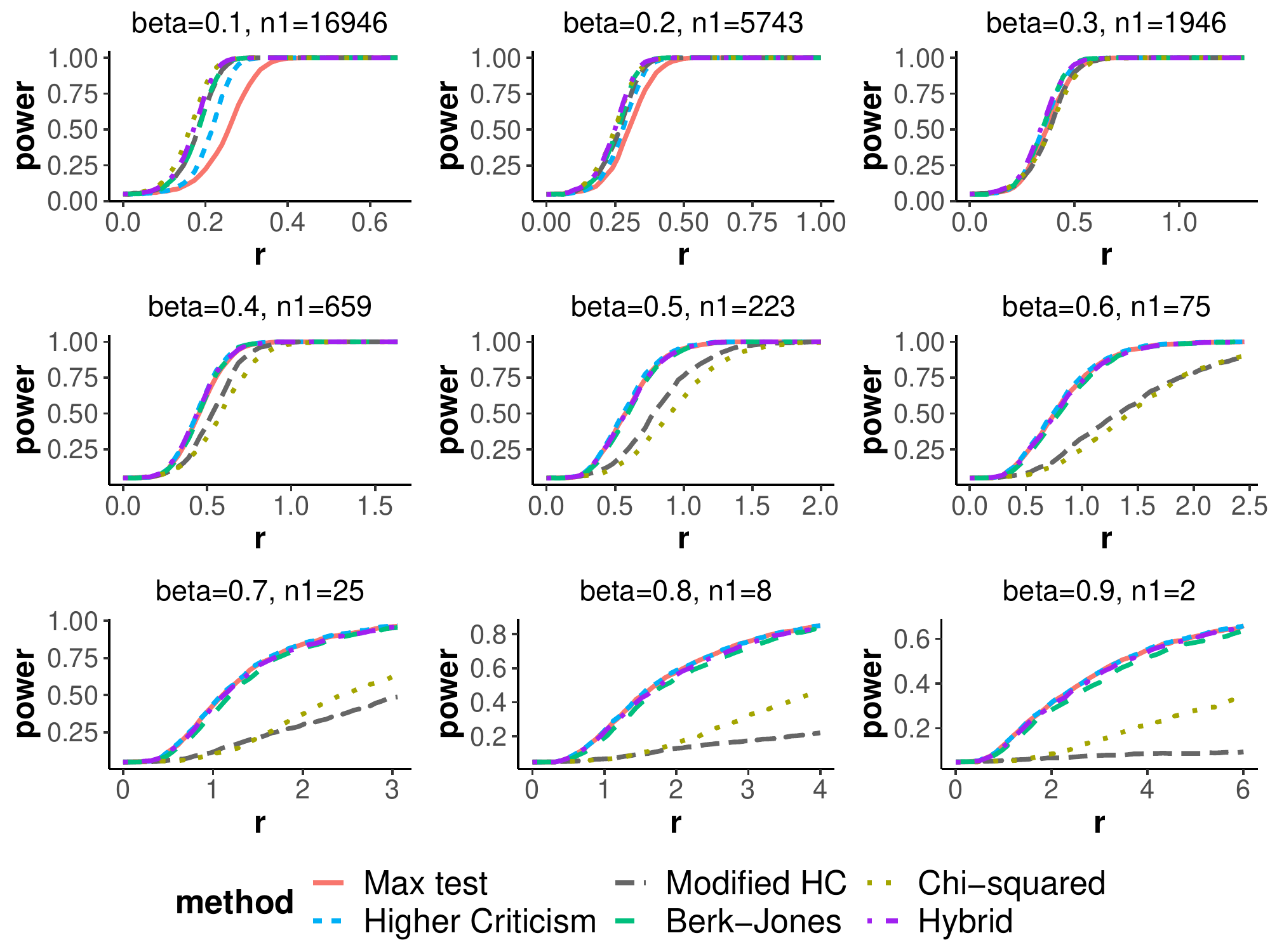}
\caption{$G = \chi^2(1)$}
\label{fig:chipower}
\end{figure}

\begin{figure}
\centering
\includegraphics[width = .8\textwidth]{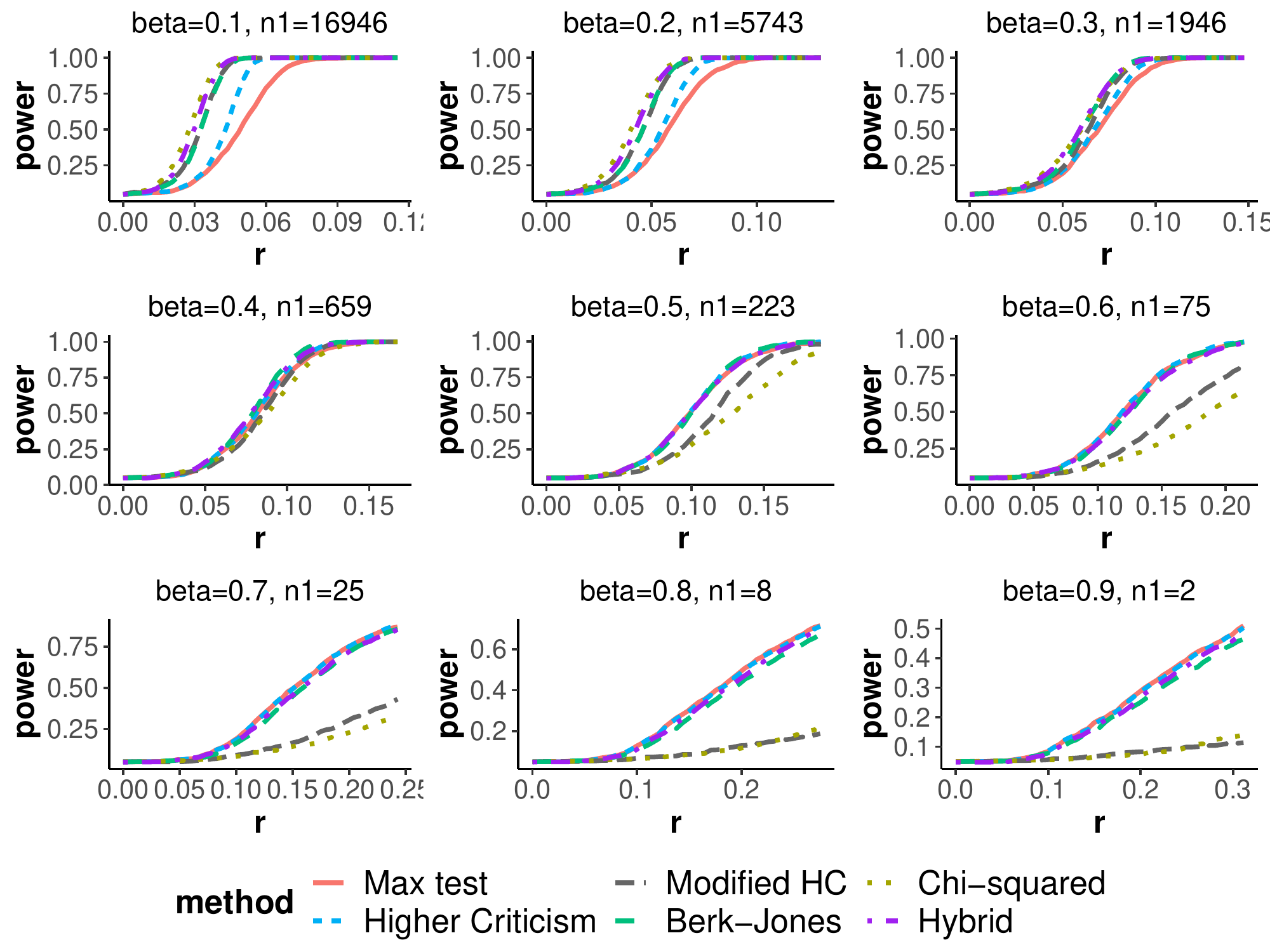}
\caption{$G=t_5$}
\label{fig:t5power}
\end{figure}

\begin{figure}
\centering
\includegraphics[width = .8\textwidth]{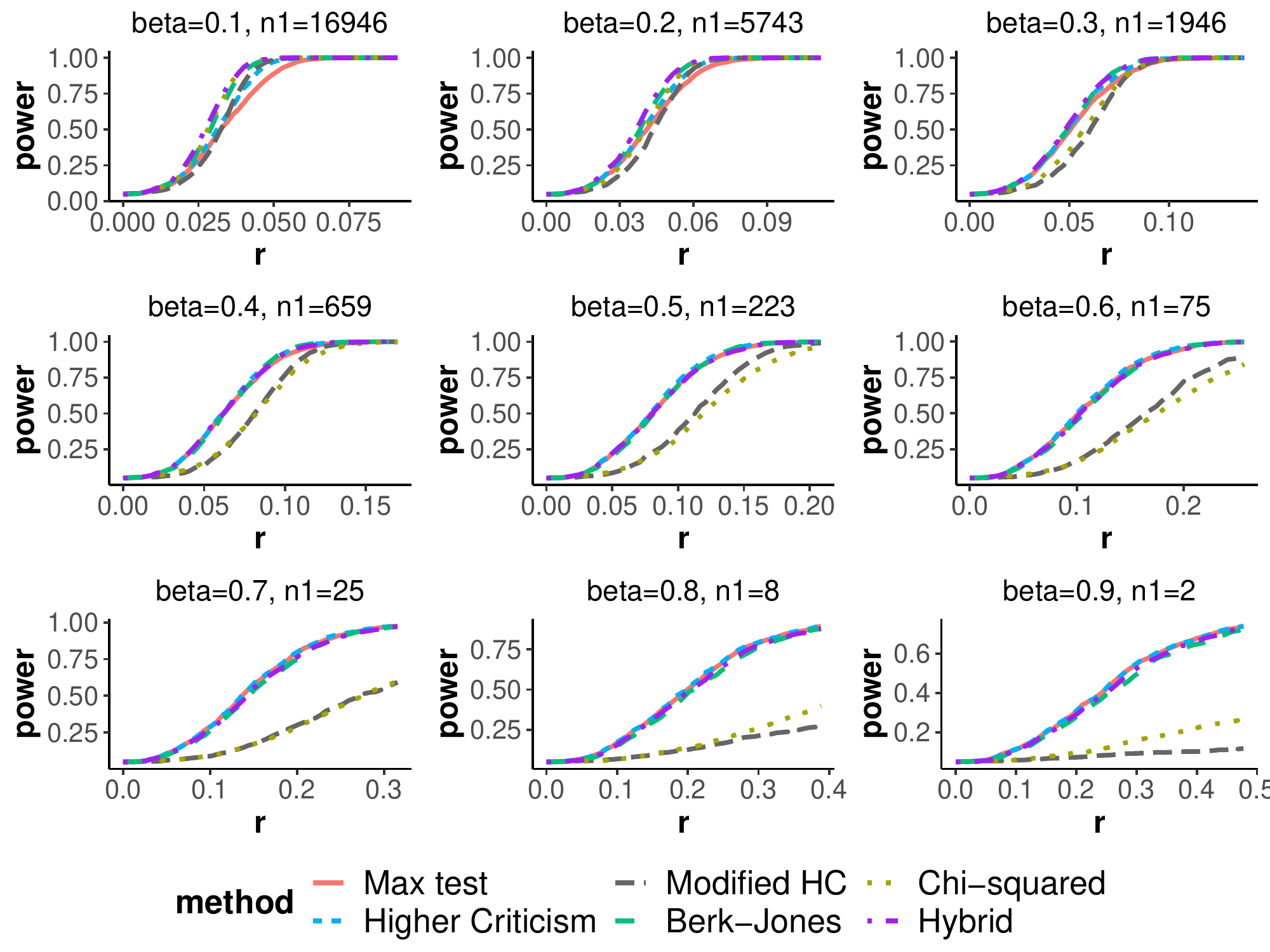}
\caption{$G=t_3$}
\label{fig:t3power}
\end{figure}

\end{document}